\newcommand\sF{{\mathcal F}}
\newcommand\sI{{\mathcal I}}
\newcommand\sL{{\mathcal L}}
\newcommand\sO{{\mathcal O}}
\newcommand\sS{{\mathcal S}}
\newcommand\sM{{\mathcal M}}
\newcommand\sC{{\mathcal C}}
\newcommand\sT{{\mathcal T}}
\newcommand\bR{{\mathbb R}}
\newcommand\bC{{\mathbb C}}
\newcommand\bQ{{\mathbb Q}}
\newcommand\bP{{\mathbb P}}
\newcommand\sN{{\mathcal N}}
\newcommand\sHom{{\mathcal Hom}}
\newcounter{lemma}
\newtheorem{lemma1}[lemma]{\setcounter{equation}{0}}
\newenvironment{lemma}{\begin{lemma1}{\bf Lemma.}}{\end{lemma1}}
\newenvironment{theorem}{\begin{lemma1}{\bf Theorem.}}{\end{lemma1}}
\newenvironment{question}{\begin{lemma1}{\bf Question.}}{\end{lemma1}}
\newenvironment{proposition}{\begin{lemma1}{\bf Proposition.}}{\end{lemma1}}
\newenvironment{corollary}{\begin{lemma1}{\bf Corollary.}}{\end{lemma1}}
\newenvironment{remark}{\begin{lemma1}{\bf Remark.}\rm}{\end{lemma1}}
\newenvironment{definition}{\begin{lemma1}{\bf Definition.}}{\end{lemma1}}
\newenvironment{conjecture}{\begin{lemma1}{\bf Conjecture.}}{\end{lemma1}}
\newenvironment{Induction Step}{\begin{lemma1}{\bf Induction Step.}}{\end{lemma1}}
\newenvironment{Proof of Theorem 1.2}{\begin{lemma1}{\bf Proof of Theorem 1.2.}}{\end{lemma1}}
\newenvironment{ps-example}{\begin{lemma1}{\bf Pseudo-Example.}\rm}{\end{lemma1}}
\title {Compact subvarieties with ample normal bundles, algebraicity and cones of cycles} \author{Thomas Peternell}
\date{\today} 
\begin{document}

\maketitle

\tableofcontents 

\section{Introduction} 
In this note we study two features of submanifolds (or possibly singular subvarieties) $Z$ with ample normal bundle in a compact K\"ahler 
manifold $X$. 

\vskip .2cm 
First we ask whether $Z$ influences the algebraic dimension $a(X)$, i.e., the maximal number of algebraically independent meromorphic functions. 
One expects (for simplicity we shall assume $Z$ smooth) 

\begin{conjecture} \label{conji} Let $X$ be a compact K\"ahler manifold containing a compact submanifold $Z$ of dimension $d \geq 1$ with ample normal bundle. Then 
$a(X) \geq d+1.$ 
\end{conjecture}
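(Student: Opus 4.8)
The plan is to produce $d+1$ algebraically independent meromorphic functions, equivalently to show that the algebraic reduction $f : X \to Y$ (a meromorphic, almost holomorphic map with $Y$ projective of dimension $a(X)$ and with general fibre $F$ satisfying $a(F)=0$) has $\dim Y \ge d+1$. I would first dispose of the codimension-one case, which already contains the essential ``$+1$'': if $Z$ is a smooth hypersurface then $N_{Z/X}=\sO_X(Z)|_Z$ is an ample line bundle, and one shows that $\sO_X(Z)$ is big. Indeed, ampleness of the normal bundle lets one extend sections of $N_{Z/X}^{\otimes m}$ off $Z$ to the formal neighborhood and, using that $X$ is K\"ahler, realize them as genuine sections of $\sO_X(mZ)$ via an Ohsawa--Takegoshi type extension; the number of such sections grows like $m^{n}$, so $X$ is Moishezon, hence projective, and $a(X)=n=d+1$. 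This case fixes the two ingredients I want to upgrade to higher codimension: (i) a positivity/extension mechanism that manufactures functions, and (ii) a way to see that the functions really move in the directions transverse to $Z$.

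\textbf{Local positivity and production of functions.} For $\codim Z = c \ge 2$ I would pass to the blow-up $\pi : \hat X \to X$ along $Z$, with exceptional divisor $E=\bP(N^{*}_{Z/X})$. Ampleness of $N_{Z/X}$ is exactly ampleness of $\sO_E(1)=\sO_{\hat X}(-E)|_E$ on the total space $E$, so $-E$ is $\pi$-ample and strictly positive along $E$; equivalently a tubular neighborhood of $Z$ in $X$ is strongly pseudoconcave, with the concavity governed by $c$. On such a neighborhood $U$ the ample normal bundle provides, through the formal principle together with $\bar\partial$-estimates on the concave boundary, a supply of meromorphic functions: sections of symmetric powers of $N_{Z/X}$ spread into $U$ and patch to elements of $\sM(U)$. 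The role of $X$ being compact K\"ahler is to promote these local functions to global meromorphic functions on $X$: by Andreotti's theory the field $\sM(U)$ is an algebraic function field, and its transcendence degree is a lower bound for $a(X)$ provided the functions extend across the rest of $X$, which the $c\ge 2$ concavity (a Hartogs phenomenon) guarantees.

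\textbf{The extra direction via the algebraic reduction.} To pin down the precise bound $a(X)\ge d+1$ I would study $f$ restricted to $Z$. Write $Y'=f(Z)\subseteq Y$. If $\dim Y' = d$ then $f|_Z$ is generically finite and one already gets $a(X)\ge d$; the remaining ``$+1$'' must come from a function that is non-constant along the fibre direction at a general point of $Z$, which is produced by step two (the concave tube separates the nearby fibres of $f$ because $-E|_E$ is positive, so no meromorphic function can be forced constant transverse to $Z$). The delicate case is $\dim Y' < d$, i.e. $Z$ dominates $Y'$ with positive-dimensional fibres $G=Z\cap F$ sitting inside a general fibre $F$ of the reduction. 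Here I would exploit the clash between two facts: $N_{Z/X}|_G$ is ample, while $a(F)=0$ and $N_{F/X}|_G\cong\sO_G^{\oplus a(X)}$ is trivial. Combining the conormal sequences of $G\subset Z\subset X$ and $G\subset F\subset X$, a nonzero section of the negative bundle $N^{*}_{Z/X}|_G$ is forced as soon as the conormal directions of $Z$ and of $F$ overlap along $G$; this is impossible, and pushing the numerology shows $Z$ cannot be swallowed by the fibres, so $\dim Y'=d$ after all and we are back in the first case. The cone-of-cycles input enters precisely to handle the balanced borderline of this overlap count: one interprets ampleness of $N_{Z/X}$ as membership of $[Z]$ in the interior of the movable cone of codimension-$c$ cycles and uses duality against the nef cone to exclude $[Z]$ from being numerically supported on the fibres of $f$.

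\textbf{Main obstacle.} The hard part is exactly the transcendental passage in step two: in the non-projective K\"ahler setting the formal principle and the ``G3''-comparison between formal-rational and meromorphic functions can fail, and $Z$ need not deform at all (ample normal bundle does not produce sections of $N_{Z/X}$ as global deformations), so one cannot build the family of functions by moving $Z$. Thus the entire lower bound must be extracted from the cohomology of a single concave neighborhood and then globalized, and it is unclear a priori that the resulting transcendence degree reaches $d+1$ rather than merely $d$. Securing the final unit of the algebraic dimension --- the genuine extra function transverse to $Z$, uniformly over the fibres of the algebraic reduction --- is where I expect the real difficulty, and where the interplay between the analytic concavity and the numerical (cone) positivity of $[Z]$ will have to be made quantitative.
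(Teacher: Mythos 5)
The first thing to say is that this statement is Conjecture~\ref{conji} of the paper: the author does not prove it, and the paper only establishes the special cases listed in Theorem~1.2 ($Z$ moving in a covering family, $X$ hyperk\"ahler with $a(X)\geq 1$, $Z$ uniruled), together with the Barlet--Magnusson and Oguiso--Peternell partial results quoted in Section~3. So there is no complete proof in the paper to compare yours against, and a genuinely complete argument here would be a new theorem. Your text is, by your own admission, a strategy rather than a proof, and the two places where it is incomplete are exactly the places where the known approaches break down.

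Concretely: (A) Andreotti's theory of pseudoconcave spaces gives an \emph{upper} bound on the transcendence degree of $\sM(U)$ for a concave neighborhood $U$ of $Z$; it does not manufacture meromorphic functions, and in the non-algebraic K\"ahler setting neither the formal principle nor a G3-type comparison is available to convert ampleness of $N_{Z/X}$ into sections of anything on $U$, let alone on $X$. You flag this yourself, but it means the entire ``production of functions'' step is missing, not merely delicate. (B) The dichotomy on $Y'=f(Z)$ does not close. If $\dim Y'=d$ you still owe one more independent function and nothing in the argument supplies it. If $\dim Y'<d$, the conormal-sequence clash you invoke does not occur: for $G=Z\cap F$ one only obtains generically injective maps $N_{G/F}\to N_{Z/X}\vert G$ (a subsheaf of an ample bundle --- no contradiction) and $N_{G/Z}\to N_{F/X}\vert G\cong\sO_G^{\oplus k}$ (again no contradiction). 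The paper's covering-family theorem works precisely because there the \emph{whole} subvariety $Z_s$ lies in a fiber $X_w$, which yields a generically surjective map from the ample sheaf $\sN_{Z_s/X}$ onto a trivial bundle; when $Z$ only meets the fiber in a proper subvariety $G$, the ample bundle sits on the surjective side of nothing. Note also that the paper explicitly doubts the stronger projectivity statement (the pseudo-example of a K\"ahler threefold with $a(X)=2$ containing a curve with ample normal bundle), so any strategy that, like yours, implicitly drives toward $X$ being Moishezon in all cases is aiming at something the author expects to be false.
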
 

If $d  = \dim X -1,$ then it is classically known that $X$ is projective, but in higher codimension there are only a few results, \cite{BM04}, \cite{OP04}. 
These results will be explicity discussed in Section 3. We just mention here that for threefolds containing a curve with ample normal bundle, the conjecture holds up to 
a strange phenomenon concerning threefolds without meromorphic functions. 
Our results  can be summarized as follows.

\begin{theorem} Conjecture \ref{conji} has a positive answer in the following cases; actually in all cases $X$ is even projective. 
\begin{enumerate} 
\item $Z$ moves in a family covering $X;$
\item $X$ is hyperk\"ahler with $a(X) \geq 1;$
\item $Z$ is uniruled. 
\end{enumerate} 
\end{theorem}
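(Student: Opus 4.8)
\medskip

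The plan is to prove in every case the stronger assertion that $X$ is projective. Since an ample normal bundle forces $d \leq \dim X - 1$, projectivity yields $a(X) = \dim X \geq d+1$ at once, so this suffices. Throughout I would use two standard facts: a compact K\"ahler manifold is projective as soon as it is Moishezon, equivalently as soon as its K\"ahler cone meets $H^{1,1}(X,\bQ)$; and, for the positivity bookkeeping, the duality between the cone of movable cycles and the pseudoeffective cone.

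I would begin with case (3), where the idea is to propagate the uniruledness of $Z$ to all of $X$. Let $C \subset Z$ be a general member of a covering family of rational curves on $Z$, so that $C$ is free in $Z$ and $N_{C/Z}$ is nef, while the hypothesis gives that $N_{Z/X}|_C$ is ample. From the sequence $0 \to N_{C/Z} \to N_{C/X} \to N_{Z/X}|_C \to 0$ on $C \cong \bP^1$ one sees that the tangential deformations of $C$ inside $Z$, together with the normal deformations supplied by the ampleness of $N_{Z/X}$, let $C$ sweep out a neighbourhood of $Z$; as the cycle space is compact this already forces $X$ to be covered by rational curves. The hard part will be to upgrade a general such $C$ from free to very free, that is to arrange $N_{C/X}$ ample: I would combine the two families of deformations and, should a direct splitting argument on $\bP^1$ fail, attach free rational curves and smooth as in Mori theory (which is legitimate here because rational curves and their deformations remain algebraic). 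This makes $X$ rationally connected, whence $H^{2,0}(X) = 0$, so $H^2(X,\bR) = H^{1,1}(X,\bR)$ carries a rational K\"ahler class and $X$ is projective.

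For case (1), write $n = \dim X$ and let $(Z_t)_{t \in T}$ be the covering family, so that the integral class $[Z] \in H^{n-d,n-d}(X,\bZ)$ is movable. The point is that ampleness of $N_{Z_t/X}$ for general $t$ should place $[Z]$ in the interior of the cone of movable $(n-d)$-cycles and make it big: the members through a fixed general point still move because the normal bundle is ample, so they sweep out a positive-dimensional subvariety, and pairing the K\"ahler form against the family yields strict positivity. I would then feed this into the cone-of-cycles duality to produce a rational $(1,1)$-class strictly positive on every effective curve, i.e. a rational K\"ahler class. The main obstacle is exactly this conversion of the analytic positivity coming from ampleness into the rationality of a K\"ahler class, which is where the formalism announced in the title is needed.

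For case (2), $X$ is hyperk\"ahler, i.e. irreducible holomorphic symplectic, and I would argue with the Beauville--Bogomolov--Fujiki form $q$ and Huybrechts' criterion that such an $X$ is projective once some $\alpha \in H^{1,1}(X,\bQ)$ satisfies $q(\alpha) > 0$. As $a(X) \geq 1$, the algebraic reduction supplies a non-zero nef rational class $\eta \in H^{1,1}(X,\bQ)$ with $q(\eta) \geq 0$. If $q(\eta) > 0$ we are done. If $q(\eta) = 0$ then $\eta$ is isotropic and induces a Lagrangian fibration whose general fibre $F$ is a complex torus; the delicate step, which I expect to be the real obstacle, is to show that this is incompatible with the hypothesis, by restricting $N_{Z/X}$ to $Z \cap F$ and checking that the flat directions of the torus obstruct its ampleness. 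This contradiction forces $q(\eta) > 0$, and $X$ is again projective.
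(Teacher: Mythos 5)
Your proposal diverges from the paper in all three cases, and each route has a genuine gap. In case (1) the decisive flaw is the final step: on a compact K\"ahler manifold a rational $(1,1)$-class that is positive on every effective curve need not be K\"ahler (on a non-projective K3 surface with no curves at all, every class is vacuously positive on effective curves), so the ``cone-of-cycles duality'' you invoke cannot by itself produce projectivity; this failure of duality outside the projective category is precisely the difficulty of the problem. The paper argues instead that the general member $Z_s$ is Moishezon (Proposition \ref{ampleopen}) and then splits into a dichotomy: if the family is connecting, $X$ is algebraically connected and hence projective by Campana \cite{Ca81}; if not, the quotient map $f: X \dasharrow W$ places the general $Z_s$ inside a compact fibre $X_w$, and the resulting generically surjective map $\sN_{Z_s/X} \to \sN_{X_w/X}\vert Z_s \simeq \sO_{Z_s}^{\oplus k}$ contradicts ampleness. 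Both the Moishezon step and this dichotomy are missing from your sketch. In case (3), your plan to upgrade a free rational curve to a very free one and conclude that $X$ is rationally connected is not justified: the extension $0 \to N_{C/Z} \to N_{C/X} \to N_{Z/X}\vert C \to 0$ only yields a \emph{free} curve, since $N_{C/Z}$ may have trivial summands, and comb-smoothing produces very free curves only when one already has a very free curve to attach --- which is exactly what is in question. Indeed the paper's own discussion of the genus-one case shows the rational quotient of $X$ can have positive-dimensional image, so rational connectedness should not be expected. The paper instead proves only that $X$ is uniruled, shows that $Z$ dominates the rational quotient $W$ via Theorem 3.7 of \cite{Pe06}, and again concludes by algebraic connectedness.

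In case (2) your appeal to the Beauville--Bogomolov form is reasonable in spirit, but the step ``$q(\eta)=0$ implies $\eta$ induces a Lagrangian fibration'' is an SYZ-type statement that is not available in this generality, and the final contradiction is not carried out: one must separate the case where the curve lies in a fibre $F$ (where the triviality of $N_{F/X}$ restricted to it does contradict ampleness) from the case where it dominates the base (where your restriction argument gives nothing). The paper's route is different and self-contained: assuming $X$ non-projective, it uses \cite{COP10} to manufacture from the algebraic reduction a nef \emph{effective} line bundle $\sL$ with $\sL \cdot C = 0$, and then invokes Theorem \ref{griffiths} and Corollary \ref{cor1} --- the Andreotti--Grauert convexity argument of Section 4 --- to show that an effective nef divisor must meet a curve with ample normal bundle positively. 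This reduction of the hyperk\"ahler case to the cone-of-curves results is the real content of case (2) and is absent from your proposal.
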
 

Up to the standard conjecture that compact K\"ahler manifolds with non-pseudo-effective canonical bundles must be uniruled, 
assertion (3) even holds if $Z$ is not of general type. 
These results might suggest that Conjecture \ref{conji} has a stronger version saying that $X$ must be projective. But this is very unlikely; we exhibit - based on \cite{OP04} - 
a candidate for a K\"ahler threefold $X$ with $a(X) = 2$ containing a curve with ample normal bundle. However a construction is still missing. 

\vskip .2cm  The second part of the paper is concerned with {\it projective} manifolds $X$ and curves $C \subset X$ with ample normal bundles. In the ``dual'' situation 
of a hypersurface $Y$ with ample normal bundle, the line bundle $\sO_X(Y)$ is big and therefore in the interior of the pseudo-effective cone. Therefore we expect that 
the class $[C]$ is in the interior of the Mori cone $\overline{NE}(X)$ of curves:

\begin{conjecture} Let $X$ be a projective manifold, $C \subset X$ a curve with ample normal bundle. Then $[C]$ is in the interior of $\overline{NE}(X).$ 
\end{conjecture}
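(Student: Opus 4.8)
Since $X$ is projective, $\overline{NE}(X) \subset N_1(X)_{\bR}$ is a full-dimensional salient closed cone, dual to the nef cone $\mathrm{Nef}(X) \subset N^1(X)_{\bR}$ under the intersection pairing. Hence $[C]$ lies in the interior of $\overline{NE}(X)$ if and only if $D \cdot C > 0$ for every nonzero nef class $D$. As nefness already gives $D \cdot C \geq 0$, the content is strict positivity, and $[C]$ fails to be interior precisely when there is a nonzero nef class $D$ with $D \cdot C = 0$. The plan is to assume such a $D$ and derive a contradiction from the ampleness of $N_{C/X}$. As a sanity check, on a surface ampleness of $N_{C/X}$ means $C^2 > 0$, and then the Hodge index theorem forces $D \cdot C > 0$ for every nonzero nef $D$, so the statement is classical in dimension two.

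The mechanism I would exploit is that a curve annihilating a nef class behaves like a ``vertical'' curve, and vertical curves cannot have ample normal bundle. Concretely, suppose first that $D$ is semiample, $D = f^{*}A$ for a morphism $f \colon X \to Y$ with connected fibres, $A$ ample and $\dim Y \geq 1$. Then $D \cdot C = 0$ forces $f(C)$ to be a point, so $C$ lies in a fibre $F$. If $F$ is smooth along $C$, then $N_{F/X}|_{C} \cong \mathcal{O}_{C}^{\dim Y}$, and the normal bundle sequence $0 \to N_{C/F} \to N_{C/X} \to N_{F/X}|_{C} \to 0$ exhibits a trivial quotient of positive rank of $N_{C/X}$. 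This is impossible: a quotient of an ample bundle is ample, while $\mathcal{O}_{C}^{\dim Y}$ has degree zero. (More robustly, the differential gives a map $N_{C/X} \to f^{*}T_{Y,f(C)} \cong \mathcal{O}_{C}^{\dim Y}$ whose image would be at once a quotient of the ample bundle $N_{C/X}$ and a subsheaf of a trivial bundle, hence of both positive and nonpositive degree; so the image vanishes, which forces $C$ into the singular locus of the fibration.) Thus the conjecture holds whenever the face of $\mathrm{Nef}(X)$ orthogonal to $[C]$ is spanned by semiample classes whose associated fibrations are smooth along $C$.

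To treat an arbitrary nef class $D$ I would replace the fibration by the nef reduction map of Bauer--Campana--Eckl--Kebekus--Peternell--Szemberg--Tsai: an almost holomorphic rational map $f \colon X \to Y$ with $\dim Y = n(D) \geq 1$, with $D$ numerically trivial on general fibres, and with $D \cdot C' > 0$ for every curve $C'$ through a very general point that is not contracted by $f$. If one can show that $C$ lies in a general fibre $F$, then $F$ is smooth, $N_{F/X}|_{C}$ is trivial, and the normal bundle argument of the previous paragraph applies verbatim to contradict ampleness of $N_{C/X}$.

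The main obstacle is exactly this last reduction. The nef reduction controls only curves through a very general point, whereas $C$ is a fixed and possibly very special curve, and $f$ is merely almost holomorphic, so a priori $C$ may meet the indeterminacy locus of $f$ or be contained in a special, singular fibre. Ruling this out --- that is, showing that ampleness of $N_{C/X}$ is incompatible with $C$ being a non-contracted curve with $D \cdot C = 0$, or forces $C$ into sufficiently general position --- is presumably the reason the statement is only conjectural. I would attack it by resolving $f$ on a modification $\mu \colon \widetilde{X} \to X$, relating $N_{C/X}$ to the normal bundle of the strict transform of $C$, and combining the degeneration of $D$ along the fibres with a Nakamaye-type description of the null locus: if $D$ is in addition big, $D \cdot C = 0$ should force $C \subset \mathbb{B}_{+}(D)$, and one then analyses the induced positivity of $N_{C/X}$ inside this proper subvariety and induces on dimension. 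Making such a null-locus analysis work for nef classes that are not big, where no clean geometric locus is available, is the hardest point.
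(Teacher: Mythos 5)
The statement you are trying to prove is stated in the paper only as a \emph{conjecture}; the paper itself does not prove it, and your sketch does not close the gap either --- as you yourself acknowledge. The genuine gap is exactly the one you name in your final paragraph: the nef reduction of $L$ (with $L\cdot C=0$) only controls curves through a \emph{very general} point, while $C$ is a fixed, possibly very special curve that may meet the indeterminacy locus or sit inside a degenerate fibre, so you cannot conclude that $C$ lies in a general (compact, smooth) fibre and run the trivial-quotient-of-an-ample-bundle contradiction. No amount of resolving the almost holomorphic map or invoking Nakamaye-type null loci is known to repair this for a non-big nef class, which is precisely why the statement remains open. Your reduction to ``$D\cdot C>0$ for every nonzero nef $D$'' and the surface sanity check via the Hodge index theorem are both correct, but they are the easy part.

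It is worth comparing your sketch with the two partial results the paper actually establishes. First, the paper proves the conjecture under the \emph{additional} hypothesis that $C$ moves in a family covering $X$: there the nef reduction argument you propose goes through verbatim, because a general member $C_s$ of the family still has ample normal bundle and \emph{does} pass through a very general point, hence lies in a compact fibre, and the surjection $\sN_{C_s/X}\to \sO_{C_s}^{\oplus k}$ contradicts ampleness. So your mechanism is exactly the right one, but it needs the covering-family hypothesis to be applicable. Second, for the case where $L$ is effective (Theorem \ref{thm 0} and Theorem \ref{thmg2}), the paper uses a completely different and more substantial route that your sketch does not touch: by Schneider the complement $X\setminus C$ is $(n-1)$-convex in the sense of Andreotti--Grauert, the finiteness theorem then bounds $\dim H^{n-1}$ of line bundles on the infinitesimal neighborhoods $Y_k$ of a divisor $Y$ disjoint from $C$, and a Riemann--Roch computation shows that a nef normal bundle for such a $Y$ is impossible; combined with a local argument when $C\subset\operatorname{supp}Y$, this yields $L\cdot C>0$ for effective nef $L$. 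If you want to make partial progress rather than attack the full conjecture, that convexity-plus-cohomology mechanism is the one that actually handles special (non-moving) curves $C$, at the price of requiring a section of $mL$.
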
 

Equivalently, if $L$ is any nef line bundle, such that $L \cdot C = 0,$ then $L \equiv 0. $ 
We prove

\begin{theorem} \label{thm 0} Let $X$ be a projective manifold, $C \subset X$ a smooth curve with ample normal bundle and $L$ a nef line bundle on $X.$
If $H^0(X,mL) \ne 0$ for some $m > 0$ and if $L \cdot C = 0,$ then $L \equiv 0.$ 
\end{theorem} 

The key is the fact, due to M.Schneider, that the complement $X \setminus C$ is $(n-1)-$convex in the sense of Andreotti-Grauert where $n = \dim X.$ Therefore we prove more generally

\begin{theorem} Let $Z$ be an $(n-1)-$convex manifold of dimension $n$. Let $Y \subset Z$ be a compact hypersurface (possibly  reducible and non-reduced). Then 
the normal bundle $N_{Y/Z}$ cannot be nef. 
\end{theorem}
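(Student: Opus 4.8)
The plan is to assume that $N:=N_{Y/Z}$ is nef and to contradict the Andreotti--Grauert finiteness theorem, which, $Z$ being $(n-1)$-convex, guarantees $\dim_{\bC}H^{i}(Z,\sF)<\infty$ for every coherent $\sF$ and every $i\ge n-1$. Write $L:=\sO_Z(Y)$, so that $N=L|_Y$ and the ideal sheaf of $Y$ is $\sI_Y=L^{-1}$. The first thing I would record is a vanishing in top degree: by the Serre-type duality valid on an $(n-1)$-convex manifold (Andreotti--Kas, Ramis--Ruget), $H^{n}(Z,\sF)$ is dual to the compactly supported group $H^{0}_{c}(Z,\sF^{*}\otimes\omega_Z)$, and the latter is zero for every locally free $\sF$, since a compactly supported holomorphic section on the connected noncompact $Z$ must vanish. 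Hence $H^{n}(Z,L^{-m})=0$ for all $m$.

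Next I would feed the twisting sequences $0\to L^{-m-1}\to L^{-m}\to N^{-m}\to 0$, where $N^{-m}$ denotes the sheaf $L^{-m}|_Y$ on the compact $Y$, into cohomology. Because $H^{n}(Z,L^{-m-1})=0$, the restriction map
$$H^{n-1}(Z,L^{-m})\longrightarrow H^{n-1}(Y,N^{-m})\longrightarrow 0$$
is surjective, and one step lower the sequence reads
$$H^{n-2}(Y,N^{-m})\longrightarrow H^{n-1}(Z,L^{-m-1})\longrightarrow H^{n-1}(Z,L^{-m})\longrightarrow H^{n-1}(Y,N^{-m})\longrightarrow 0.$$
Writing $a_m:=h^{n-1}(Z,L^{-m})$ and $b_m:=h^{n-1}(Y,N^{-m})$, all finite, this gives the bookkeeping inequality $a_{m+1}\le a_m-b_m+h^{n-2}(Y,N^{-m})$. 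By Serre duality on the $(n-1)$-dimensional $Y$ one has $b_m=h^{0}(Y,\omega_Y\otimes N^{m})$ and $h^{n-2}(Y,N^{-m})=h^{1}(Y,\omega_Y\otimes N^{m})$.

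If $N$ is moreover big the argument closes cleanly. Kawamata--Viehweg vanishing on $Y$ gives $h^{1}(Y,\omega_Y\otimes N^{m})=0$ for $m\ge 1$, so $a_{m+1}\le a_m-b_m$, whence $\sum_m b_m\le a_1<\infty$; but $b_m=h^{0}(Y,\omega_Y\otimes N^{m})$ grows like $\tfrac{N^{n-1}}{(n-1)!}\,m^{n-1}\to\infty$, a contradiction. Thus $N$ cannot be nef and big. (If $Y$ is singular or non-reduced I would run the same computation with the dualizing sheaf $\omega_Y$ and the corresponding vanishing on a projective model.)

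The genuine obstacle is the case in which $N$ is nef but \emph{not} big. Then $N^{n-1}=0$, the numbers $b_m=h^{0}(Y,\omega_Y\otimes N^{m})$ need not grow and may vanish identically (for instance a projective space with trivial normal bundle), so the purely intrinsic cohomology of the pair $(Y,N)$ carries no contradiction whatsoever; indeed $\bP^{n-1}\times\Delta$ shows that such a configuration is excluded only because the total space fails to be $(n-1)$-convex, a fact invisible to the groups $H^{\bullet}(Y,N^{\pm m})$. Consequently the non-big case must exploit the ambient convexity directly. Here I would fix an $(n-1)$-convex exhaustion $\varphi$ of $Z$ and, using that a nef $N$ admits metrics on $L$ whose curvature is $\ge-\varepsilon\omega$ near $Y$, attempt to combine $\varphi$ with $-\log\|s_Y\|^{2}$ (for the canonical section $s_Y$ of $L$) into an exhaustion violating the eigenvalue condition of $(n-1)$-convexity, or else to reduce to the big case through the numerical reduction of $N$. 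Controlling the Levi form of this combined function along the flat directions of $N$ is exactly where I expect the main difficulty to lie.
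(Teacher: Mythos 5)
Your skeleton --- Andreotti--Grauert finiteness in degree $n-1$, vanishing of $H^{n}$ on the noncompact $Z$, the twisting sequences along powers of $\sI_Y=L^{-1}$, Serre duality on $Y$, and an asymptotic Riemann--Roch count --- is exactly the skeleton of the paper's proof. But as you yourself point out, your argument only closes when $N$ is big, so the proposal does not prove the stated theorem; and the exhaustion-function attack you sketch for the nef-but-not-big case is not what is needed. The missing idea is a second parameter, not a Levi-form computation: run the same $\sI_Y$-adic bookkeeping not for $L^{-m}$ alone but for $\sF\otimes L^{-m}$, where $\sF$ is a line bundle (in the paper's situation the restriction of a negative line bundle from the ambient projective manifold) with $\sF\vert_Y=A^{-1}$ for $A$ ample on $Y$. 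Kodaira vanishing (applied on the ambient manifold when $Y$ is singular or non-reduced) kills the groups $H^{n-2}(Y,(N^{*})^{k}\otimes A^{-1})$, so the two monotone bounded sequences $h^{n-2}$ and $h^{n-1}$ of the infinitesimal neighborhoods stabilize and the five-term exact sequence forces $H^{n-1}(Y,(N^{*})^{k}\otimes A^{-1})=0$, i.e. $H^{0}(Y,\omega_Y\otimes N^{k}\otimes A)=0$, for all $k\ge k_0(A)$ --- with no bigness of $N$ used anywhere.

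The contradiction is then extracted from the two-variable polynomial $P(k,m)=\chi(Y,\omega_Y\otimes N^{k}\otimes A^{m})$ (computed via $\chi(X,K_X\otimes\sO(kY)\otimes\tilde A^{m})$ on the ambient manifold in the non-reduced case). Kodaira vanishing identifies $P(k,m)$ with $h^{0}$ for $k,m\ge 1$, so $P(k,m)=0$ for $k\ge k_0(m)$; a polynomial vanishing on such a region vanishes identically, and $P(0,m)=\chi(Y,\omega_Y\otimes A^{m})$ grows like $A^{n-1}m^{n-1}/(n-1)!\neq 0$ (equivalently, the ambient computation yields $Y\equiv 0$). This is precisely how the paper bypasses the big/non-big dichotomy that blocks you. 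Your test case $\bP^{n-1}\times\Delta$ illustrates the point: there the one-parameter groups $H^{n-1}(Z,L^{-m})$ are all finite (indeed zero), which is why your computation sees nothing, whereas the twisted groups $H^{n-1}(Z,A^{-1}\otimes L^{-m})$ contain $H^{n-1}(\bP^{n-1},A^{-1})\otimes H^{0}(\Delta,\sO(-m\cdot 0))$ and are infinite-dimensional --- so it is exactly the ample twist that lets the Andreotti--Grauert finiteness detect the failure of $(n-1)$-convexity.
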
 

It is tempting to ask for generalizations for $q-$convex manifolds and subvarieties of higher codimension; we discuss this in Section 4. We also prove some further
results in the spirit of Theorem \ref{thm 0}.

\section{Preliminaries} 

\noindent
We start by fixing some notations. 
\vskip .2cm \noindent
(1) Given a complex manifold $X$ and a complex subspace $Y \subset X$ with defining ideal sheaf $\sI,$
the normal sheaf $\sN_{Y/X}$ of $Y$ is given by
$$ \sN_{Y/X} = \sHom(\sI/\sI^2,\sO_Y) = (\sI/\sI^2)^*.$$
\vskip .2cm \noindent
(2) A coherent sheaf $\sS$ on a compact complex space $X$ is {\it ample} if the tautological line bundle $\sO(1)$ on $\bP(\sS)$ is 
ample. Here the projectivization is taken in Grothendieck's sense (cp. e.g. [Ha77], II.7). For details on ample sheaves we refer e.g. to
\cite{AT82}. 
\vskip .2cm \noindent (3) The algebraic dimension, the transdental degree over $\bC$ of the field of meromorphic functions,  of a compact manifold $X$ will be denoted by $a(X).$ 
\vskip .2cm \noindent (4) 
A compact K\"ahler manifold (or a manifold in class $\sC$, i.e. bimeromorphic to a K\"ahler manifold) is called {\it simple}, if there
is no proper compact subvariety through a very general point of $X.$ Equivalently, there is no family of proper subvarieties of $X$ which 
cover $X$. In particular $a(X) = 0.$ \\
The only known examples of simple manifolds are - up to bimeromorphic equivalence - general complex tori and ``general'' hyperk\"ahler manifolds. 
In dimension 3, Brunella [Br06] proved that the canonical bundle of a simple manifold $X$ must be pseudo-effective. It is also known \cite{DP03} that if
a simple threefold $X$ has a minimal model $X'$, i.e. $X'$ is a normal K\"ahler space with only say terminal singularities and $K_{X'}$ is nef, then $\kappa (X) = 0$,
but it is very much open whether $K_{X'} \equiv 0.$ Once this is known, it follows that $X$ 
is bimeromorphic to a quotient of a torus by a finite group.

The following proposition will be used to establish projectivity in section 3.

\begin{proposition} \label{ampleopen} 
Let $X$ be a compact K\"ahler manifold and $(Z_s)_{s\in S}$ a covering family of subvarieties. Assume that the general $Z_s$ is irreducible and reduced and
that some irreducible reduced member $Z_0$ has ample normal sheaf. Then the general member $Z_s$ is Moishezon. 
\end{proposition}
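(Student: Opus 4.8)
The plan is to combine two ingredients. The first is that an irreducible reduced compact subvariety carrying an ample normal sheaf is automatically Moishezon; the second is that ampleness of the normal sheaf is an open condition in the family, so that it propagates from the distinguished member $Z_0$ to the general member. Granting both, the general member $Z_s$ inherits an ample normal sheaf and is therefore Moishezon.

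For the first ingredient, let $W$ be any irreducible reduced member with $\sN := \sN_{W/X}$ ample. By the definition of ampleness of a coherent sheaf, the tautological bundle $\sO(1)$ on $\bP(\sN)$ is ample; since $W$ is compact, $\bP(\sN)$ is proper over $W$ and hence a compact complex space carrying an ample line bundle, so it is projective. The projection $\bP(\sN) \to W$ has image the support of $\sN$, which is all of $W$ because $\codim W \geq 1$ forces $\sI/\sI^2$, and hence its dual, to be nonzero at the generic point of the irreducible $W$. Thus $W$ is the holomorphic image of a projective variety, so $a(W) = \dim W$; being moreover a subvariety of the Kähler manifold $X$, the variety $W$ is in fact projective.

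For the second ingredient, I would normalize the relevant irreducible component of the cycle space to obtain $S$ with universal family $p \colon \mathcal{Z} \to S$ and evaluation $q \colon \mathcal{Z} \to X$, where $q$ is surjective because the family is covering. After shrinking $S$ to the open locus over which $p$ is flat with generically smooth fibres, the relative normal sheaf is defined and restricts fibrewise to $\sN_{Z_s/X}$, and $\bP$ of it carries a relative $\sO(1)$. The set of $s$ for which this $\sO(1)$ is ample on the fibre $\bP(\sN_{Z_s/X})$ is open in $S$ — this is the openness of ampleness of a line bundle along the fibres of a proper map. Since this set contains the point of $S$ corresponding to $Z_0$, it is a nonempty open, hence dense, subset of the irreducible space $S$, and therefore the general member has ample normal sheaf.

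The principal obstacle is the transfer of ampleness from the single member $Z_0$ to the general one, and it splits into two technical points. First, $Z_0$ may be a special, possibly singular, member at which the relative normal sheaf fails to base-change to $\sN_{Z_0/X}$; I would handle this by phrasing openness as stability of ampleness under deformation of the embedded pair $Z_s \subset X$, which only requires $Z_0$ to move inside the family rather than to lie in the flat locus. Second, the openness of ampleness must be invoked in the analytic, non-projective setting; here it is essential that $X$ is Kähler, so that the components of the cycle space are compact Kähler and the semicontinuity machinery underlying openness of ampleness remains available.
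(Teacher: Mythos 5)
Your first ingredient is fine and is essentially how the paper also finishes: a compact irreducible reduced space dominated by a projective variety is Moishezon. The gap is in the second ingredient, which is the actual content of the proposition. You want to propagate ampleness of the normal \emph{sheaf} from $Z_0$ to the general member, but the openness statement you invoke is exactly what needs proof, and your proposed fix (``phrasing openness as stability of ampleness under deformation of the embedded pair'') merely restates the claim. The difficulty is that the fibrewise normal sheaves $\sN_{Z_s/X}$ of a family of possibly singular subvarieties do not assemble into a coherent sheaf on the graph whose formation commutes with base change: the relative conormal sheaf can acquire torsion and jump at special members, $\bP(\sN_{Z_s/X})$ is not the fibre of a single space proper over $S$, and the standard openness of ampleness applies to a \emph{fixed line bundle} on the total space of a proper morphism, not to a varying coherent sheaf on varying, possibly singular fibres. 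In particular, nothing in your argument produces, near $s=0$, a line bundle on a space proper over $S$ whose restriction to the fibre over $0$ is ample --- which is the only form of openness actually available here.

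The paper's proof is designed precisely to reduce to that form. It works on the graph $p\colon U\to X$, $q\colon U\to S$, takes the kernel $\sS$ of $p^*\Omega^1_X\to\Omega^1_{U/S}$ (which restricts on an irreducible reduced fibre to the pullback of the conormal sheaf modulo torsion), and replaces the normal sheaf by the rank-one sheaf $\sT=(\bigwedge^r\sS)^*$, whose restriction to $Z_0$ modulo torsion is ample because $\sN_{Z_0/X}$ is. After normalizing and desingularizing the graph, $\sT$ becomes an honest line bundle $\hat\sT$, big and nef on the component over $Z_0$; twisting by a line bundle relatively ample for the projective modification $\hat U\to U$ makes it ample on the entire central fibre, and only then is openness of ampleness for line bundles in a proper family invoked, yielding projectivity of the nearby modified fibres and hence Moishezonness of the $Z_s$. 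If you want to rescue your outline you would have to carry out an equivalent reduction; as written, the transfer step is asserted, not proved. A smaller point: you do not need, and should not claim without more care, that the general $Z_s$ has ample normal sheaf --- the proposition only asserts Moishezonness, and the paper deliberately proves only the weaker conclusion.
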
 

\begin{proof} Let $q: U \to S$ be the graph of the family, with projection $p: U \to X.$   
We obtain an inclusion $p^*(\Omega^1_X) \to \Omega^1_U $ and in combination with the canonical surjection
$\Omega^1_U \to \Omega^1_{U/S} $ a map
$$ \alpha: p^*(\Omega^1_X) \to \Omega^1_{U/S}. $$
Let $\sS = {\rm Ker}(\alpha),$ a torsion free sheaf of rank say $r$. Take $s \in S$ such that $Z_s$ is irreducible and reduced and 
consider the complex-analytic fiber $\tilde Z_s := q^{-1}(s).$ Then $\tilde Z_s$ is generically reduced and set-theoretically we have $\tilde Z_s = 
Z_s,$ i.e., $Z_s$ is the reduction of $\tilde Z_s.$ It follows immediately that 
$$ (\sS \vert Z_s) / {\rm tor} = p^*(\sN^*_{Z_s/X}) / {\rm tor}. $$ 
We basically need this for $s = 0.$ Namely, let $\sT =  (\bigwedge^r \sS)^*.$ Then $\sT$ is a torsion free sheaf of rank $1$, and by our assumption
$(\sT \vert Z_0)/{\rm tor} $ is ample. We now first take normalizations $\tilde  U \to U$ and $\tilde S \to S$ and then a
desingularization $\hat U \to \tilde  U, $ inducing a  projection $\hat q: \hat U \to \tilde S.$ Let $s_0 \in \tilde S$ be a point over $0$
and $\hat Z_{s_0} $ be the set-theoretic fiber over $s_0$, which might be reducible. Let $A_0$ be the irreducible component of $\hat Z_{s_0}$ mapping onto $Z_{s_0}.$ 
Thus we have a birational map $A_0 \to Z_0.$ 
We may choose $\pi$ such that 
$$\pi^*(\sT) / {\rm tor} =: \hat \sT$$
is locally free. Then $\hat \sT \vert A_0$ is big and nef, since $\sT \vert Z_0$ is ample. Since $\hat U \to U$ is a projective morphism, we find a
line bundle $\sM$ on $\hat U$ such that $L = (\hat \sT)^{\otimes N} \otimes \sM $ is ample on every component of $\hat Z_{s_0}. $ Hence 
$L \vert \hat Z_{s_0}$ is ample, and therefore $L \vert \hat Z_s$ is ample for general $s$. Consequently the general $Z_s$ is Moishezon. 

\end{proof} 

\section{The algebraic dimension}
\setcounter{lemma}{0}

In this section we study the following 

\begin{conjecture} Let $X$ be a compact K\"ahler manifold and $Z \subset X$ an irreducible compact subvariety of dimension $d.$
Assume that the normal sheaf $\sN_{Z/X}$ is ample. Then $a(X) \geq d+1. $
\end{conjecture}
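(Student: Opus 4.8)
The plan is to argue by induction on $n = \dim X$, using the algebraic reduction to slice $Z$ down to smaller dimension while preserving ampleness of the normal sheaf. The base cases are classical: for $d = n-1$ an ample normal bundle of a hypersurface forces $X$ to be projective, so $a(X) = n = d+1$; and for $n = 2$, $d = 1$, a curve $Z=C$ with ample normal sheaf has $\deg \sN_{C/X} = C\cdot C > 0$, and a compact K\"ahler surface carrying a class of positive self-intersection is projective, so $a(X)=2=d+1$. For the inductive step, let $f \colon X \to W$ be the algebraic reduction (a dominant meromorphic map to a projective variety $W$ with $\bC(W) = \bC(X)$), so that $k := \dim W = a(X)$, and suppose for contradiction that $k \leq d$. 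After resolving the indeterminacy of $f$ by a bimeromorphic modification $\sigma \colon \hat X \to X$ we obtain a holomorphic map $\hat f \colon \hat X \to W$ and the strict transform $\hat Z$ of $Z$.

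The first ingredient is a restriction lemma: if $F$ is a general fibre of $\hat f$ and $A := Z \cap F$ (formed via a general representative, or on a Bertini-type general slice so as to control the singularities of $Z$), then $F$ meets $Z$ transversally and, up to torsion, $N_{A/F}$ is identified with $\sN_{Z/X}|_A$. Since the restriction of an ample sheaf to a subvariety is again ample, $A$ is a subvariety with ample normal sheaf sitting inside the lower-dimensional manifold $F$, where $\dim F = n - k < n$ provided $k \geq 1$. This is the device that feeds the induction. The main mechanism then handles the configuration in which $Z$ dominates $W$ with positive-dimensional fibres, i.e. $\dim f(Z) = k$ and $k < d$. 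Here the general fibre $F$ meets $Z$ in the subvariety $A$ of dimension $d - k \geq 1$ with ample normal sheaf, so the induction hypothesis applied to $(F, A)$ yields $a(F) \geq (d-k) + 1 \geq 2$. But a general fibre of the algebraic reduction has algebraic dimension $0$, so $a(F) = 0$, a contradiction. The non-dominant case $\dim f(Z) < k$ should be reduced to this one by replacing $W$ with $\overline{f(Z)}$ and re-slicing by a general fibre of the induced map $X \to \overline{f(Z)}$, which again lowers the dimension and carries an ample normal sheaf; moreover the sub-case where $Z$ itself moves in a covering family is disposed of by Proposition~\ref{ampleopen}, which produces Moishezon members and hence meromorphic functions.

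This leaves two extremal configurations in which the slicing collapses. The first is the boundary case $k = d$, where $f(Z) = W$ and $Z \to W$ is generically finite, so that $Z$ is essentially a multisection of the algebraic reduction: slicing by a general fibre now produces only finitely many points, and instead one must exhibit a single genuinely new algebraically independent function in order to contradict $a(X) = d$. Geometrically, at a general intersection point $p \in \hat Z \cap F$ the transversality identifies $\sN_{Z/X}|_p$ with the fibre tangent space $T_{F,p}$, so ampleness of the normal sheaf expresses positivity of the vertical directions of $\hat f$ along $\hat Z$; the task is to convert this fibrewise positivity into an actual enlargement of the function field, i.e. to prove that $\hat f$ cannot be the algebraic reduction. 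The second is the very bottom of the induction, $k = 0$: here there is no reduction to slice with, and one must produce the first nonconstant meromorphic function from the ampleness of $\sN_{Z/X}$ alone (Proposition~\ref{ampleopen} settles this when $Z$ moves, but not when $Z$ is rigid).

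I expect the multisection case $k = d$ (together with the rigid $k=0$ case) to be the main obstacle: it is precisely the configuration responsible for the ``strange phenomenon'' for threefolds carrying a curve with ample normal bundle noted in the introduction and studied in \cite{BM04}, \cite{OP04}. A general resolution of these extremal cases appears to require a genuine positivity or vanishing input, for instance via the $(n-1)$-convexity of $X \setminus Z$ in the sense of Schneider, or an $L^2$/current estimate, going beyond the purely dimension-theoretic slicing that closes the interior range $1 \le k < d$.
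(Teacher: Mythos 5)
You are attempting to prove a statement that the paper itself presents as an open \emph{conjecture}: the paper never proves it, but only establishes special cases (when $Z$ moves in a covering family, when $X$ is hyperk\"ahler with $a(X)\geq 1$, when $Z$ is uniruled), by methods quite different from your slicing scheme. So your proposal must be judged as an attack on an open problem, and by your own admission it leaves the two extremal configurations unresolved: the multisection case $k=d$ and the rigid case $k=0$. Those are not fringe cases; for a rigid curve in a threefold with $a(X)\leq 1$ they are exactly where the known difficulty sits (the ``strange phenomenon'' of simple threefolds in Theorem~\ref{OP}), and your instinct that resolving them needs a genuine positivity input, such as Schneider's $(n-1)$-convexity of $X\setminus Z$, is precisely the route the paper takes for its cone-of-curves results in Section~4. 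At best, then, your argument would be a reduction of the conjecture to these extremal cases.

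However, the reduction itself has a fatal gap: the pivot of your inductive step in the range $1\leq k<d$ is the claim that a general fibre $F$ of the algebraic reduction satisfies $a(F)=0$, which you play off against the inductive bound $a(F)\geq (d-k)+1\geq 2$. This claim is false. What is true is only that every meromorphic function on $X$ restricts to a constant on the general fibre; the fibre may carry many meromorphic functions of its own which do not come from $X$. For example, a two-dimensional complex torus $T$ with $a(T)=1$ has algebraic reduction $T\to E$ onto an elliptic curve whose fibres are elliptic curves, hence projective with $a(F)=1=\dim F$; similarly, a generic extension of an elliptic curve by an abelian surface gives a three-dimensional torus with $a(T)=1$ whose algebraic-reduction fibres are projective surfaces with $a(F)=2$. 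So $a(F)$ can be as large as $\dim F$, no contradiction arises from $a(F)\geq 2$, and the interior range of your induction does not close either. (There is also a secondary issue in your restriction lemma: the identification of $N_{A/F}$ with $\sN_{Z/X}\vert_A$ needs transversality of $Z$ and $F$ along \emph{all} of $A$, not just at general points, before one can restrict ampleness; but this is minor compared to the failure of the $a(F)=0$ claim.) As it stands, the proposal therefore proves no case of the conjecture beyond what is classical; the cases the paper does settle are obtained instead via Proposition~\ref{ampleopen} combined with Campana's algebraic connectedness criterion, the Barlet--Magnusson theorem, and the Andreotti--Grauert/Kodaira-vanishing/Riemann--Roch arguments of Section~4.
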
 

In case $Z$ is a divisor, the line bundle $\sO_X(Y)$ is big (and nef), thus $X$ is Moishezon, hence projective. In higher codimension,
there are basically two results confirming the conjecture. 

\begin{theorem} [OP04] \label{OP} Let $X$ be a smooth compact K\"ahler threefold, $C \subset X$ an irreducible curve with ample normal sheaf. 
Then $a(X) \geq 2$ unless possibly $X$ is a simple threefold which is not bimeromorphic to a quotient of a torus by a finite group.
\end{theorem}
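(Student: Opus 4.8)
The plan is to argue by the value of $a(X)\in\{0,1,2,3\}$, the only cases needing work being $a(X)=0$ and $a(X)=1$; indeed $a(X)=3$ means $X$ is Moishezon, hence (being K\"ahler) projective, and $a(X)=2$ is already the assertion. So I would assume $a(X)\le 1$ and try to force $X$ into the stated exception. Two elementary principles drive everything. First, a quotient bundle of an ample bundle on a curve is again ample, hence of positive degree; in particular $\sN_{C/X}$ has no trivial quotient, so whenever $C$ lies in a subvariety $W$ along which some quotient of $\sN_{W/X}$ restricts trivially to $C$, the conormal sequence contradicts ampleness. Second, since $C$ is a smooth projective curve, intersecting a family of subvarieties with $C$ maps its parameter space to a symmetric product $\mathrm{Sym}^{d}(C)$, which is projective; this algebraizes any parameter space whose members meet $C$.

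\emph{The case $a(X)=1$.} After a modification the algebraic reduction is a holomorphic fibration $f\colon X\to B$ onto a smooth curve, with general fibre a smooth surface $F$ satisfying $a(F)=0$; such an $F$ is a non-projective K3 surface or $2$-torus, and in particular carries no effective divisor of positive self-intersection. By the no-trivial-quotient principle $C$ cannot lie in a fibre (there $\sN_{F/X}|_C\cong\sO_C$), so $f|_C$ is a finite multisection meeting the general fibre in $d=C\cdot F$ points. The goal is to contradict $a(F)=0$: one should leverage the ampleness of $\sN_{C/X}$ along this multisection to produce an effective divisor on $X$ meeting the general fibre in a curve of positive self-intersection, thereby forcing $F$ to be projective. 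For torus fibres the mechanism is transparent — an ample-normal-bundle curve in a torus must be nondegenerate and, being a projective curve, generates the torus via a surjection from its Jacobian, so the torus, and hence $X$, is abelian with $a(X)=3$ — and it is the K3 case that carries the real difficulty.

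\emph{The case $a(X)=0$.} If $X$ is simple we are in the stated exception; it remains only to note that a simple $X$ bimeromorphic to a torus quotient cannot carry an ample-normal-bundle curve, since on a torus such a curve forces $a=3$ as above. So assume $X$ not simple: the Barlet cycle space of the compact K\"ahler manifold $X$ then has a compact component $T$ whose members $(Z_t)$ cover $X$. If the $Z_t$ are surfaces, a one-parameter covering subfamily has projective parameter curve $T$ (compact curves being projective), and the correspondence $\{(x,t):x\in Z_t\}$, finite over $X$ and dominant over $T$, pushes a nonconstant function from $T$ down to $X$; thus $a(X)\ge 1$, contradicting $a(X)=0$. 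If the $Z_t$ are curves the parameter $T$ is a compact surface, and when $a(T)\ge 1$ the same correspondence again gives $a(X)\ge 1$, a contradiction.

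The one remaining configuration — a covering family of \emph{curves} with $a(T)=0$ — is the genuine obstacle, and is exactly where the simple, non-torus exception is isolated. Here the members $Z_t$ need not meet $C$ (two curves in a threefold generically miss), so the second principle is unavailable, and the curves cannot in general be organized into an analytic covering family of surfaces: for a generic $a=0$ torus swept out by translates of an elliptic curve, $T$ is a non-projective $2$-torus and the one-parameter sweeps do not close up. The torus computation shows such an $X$ carries no ample-normal-bundle curve at all, so the hypothesis on $C$ should push $X$ away from this degenerate behaviour; making precise that a curve of ample normal bundle, in the presence of an $a=0$ covering family of curves, either manufactures a second algebraically independent function (yielding $a(X)\ge 2$) or drives $X$ to a torus quotient — leaving open precisely the simple, non-torus case — is the heart of the matter. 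The analogous missing ingredient for $a(X)=1$, extracting a positive-self-intersection curve in the K3 fibres from the ampleness of $\sN_{C/X}$, is of the same nature and constitutes the main technical difficulty of the theorem.
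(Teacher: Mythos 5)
This theorem is not proved in the paper at all: it is quoted verbatim from [OP04], so there is no internal proof to compare against. Judged on its own terms, your write-up is a case-analysis plan rather than a proof, and you say so yourself at the two places where all the difficulty lives. In the case $a(X)=1$ you reduce to an algebraic reduction $f\colon X\to B$ with general fibre $F$ of algebraic dimension $0$ and note that $C$ is a multisection, but the step that is supposed to produce a contradiction --- extracting from the ampleness of $\sN_{C/X}$ an effective divisor meeting $F$ in a curve of positive self-intersection --- is announced as ``the main technical difficulty'' and never carried out. Moreover your ``transparent'' torus sub-case is incoherent as written: you have just shown that $C$ does \emph{not} lie in a fibre, so there is no ``ample-normal-bundle curve in a torus'' to which the Jacobian argument could apply; what you would actually need is a statement about multisections of a torus fibration, which is a different (and nontrivial) assertion. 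In the case $a(X)=0$ you correctly dispose of covering families of surfaces and of curves with $a(T)\geq 1$, but the remaining configuration --- a covering family of curves with $a(T)=0$, which is precisely where the ``simple but not a torus quotient'' exception must be isolated --- is again left as ``the heart of the matter.'' A smaller inaccuracy: your claim that a simple $X$ bimeromorphic to $T/G$ cannot carry such a $C$ ``since on a torus such a curve forces $a=3$'' misfires, because a simple torus contains no curves whatsoever; the correct route is to show that a curve contracted by the bimeromorphic map to $T/G$ cannot have ample normal sheaf.

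So the proposal identifies the right skeleton (reduction to $a(X)\in\{0,1\}$, the no-trivial-quotient principle for $\sN_{C/X}$, algebraization of parameter spaces via intersection with $C$), but every load-bearing step is deferred. The actual argument of [OP04] does not proceed by manufacturing divisors in the fibres; it works with the dual K\"ahler cone and the structure theory of non-algebraic K\"ahler threefolds (Brunella's pseudo-effectivity of $K_X$ for simple threefolds and the minimal model results recalled in Section 2 of this paper), which is exactly the machinery needed to pin down the exceptional case. As it stands your text is a statement of the problem's difficulty, not a solution to it.
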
 

\begin{theorem} [BM04] \label{BM} Let $X$ be a compact K\"ahler manifold of dimension $n,$ and $Y \subset X$ a locally complete intersection of dimension $p$ with ample
normal bundle. Assume furthermore that there is covering family $(Z_s)_{s \in S}$ of $q-$cycles with $p+q+1 = n.$ 
Then either $a(X) \geq p+1$ or the following holds. The compact irreducible parameter space $S$ is simple with $\dim S = p+1, $ the set 
$$ \Sigma = \{s \in S \ \vert \ Z_s \cap Y \ne \emptyset \ \} $$
has pure codimension 1 in $S$, and $S \setminus \Sigma $ is strongly pseudo-convex. 
\end{theorem}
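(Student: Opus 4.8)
The plan is to analyze the incidence variety of the covering family and to transfer the convexity coming from the ample normal bundle of $Y$ to the parameter space $S$. Let $I \subset X \times S$ be the graph of $(Z_s)$, with the two projections $p \colon I \to X$ and $q \colon I \to S$; then $\dim I = \dim S + q$, and since the family covers $X$ the map $p$ is surjective, so its general fibre has dimension $\dim S + q - n = \dim S - (p+1)$. In particular $\dim S \geq p+1$. The preimage $p^{-1}(Y)$ has dimension $\dim Y + (\dim S - p - 1) = \dim S - 1$, and $\Sigma = q(p^{-1}(Y))$ is a proper analytic subset of $S$; since the general $Z_s$ meets $Y$ in the expected dimension $p + q - n = -1$, i.e. not at all, $\Sigma \neq S$, and the generically finite map $p^{-1}(Y) \to \Sigma$ shows that the top-dimensional part of $\Sigma$ has codimension $1$.

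Next I would feed in the positivity of $\sN_{Y/X}$. Since $Y$ has codimension $n - p = q+1$, the theorem of M.\ Schneider (quoted in the introduction for curves) gives that the complement $X \setminus Y$ is $(q+1)$-convex in the sense of Andreotti--Grauert. For $s \in S \setminus \Sigma$ the cycle $Z_s$ is disjoint from $Y$, hence is a compact $q$-cycle in $X \setminus Y$; this realises $W := S \setminus \Sigma$ as a locally closed analytic subspace of the Barlet space $\sC_q(X \setminus Y)$. By Barlet's convexity theorem for cycle spaces, the space of compact $q$-cycles in a $(q+1)$-convex manifold is strongly pseudoconvex (it is even Stein when $X \setminus Y$ is $(q+1)$-complete), and strong pseudoconvexity passes to the analytic subspace $W$. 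This already yields the assertion that $S \setminus \Sigma$ is strongly pseudoconvex, and, combined with the fact that meeting $Y$ is a single codimension-one incidence condition enforced by the ampleness, that $\Sigma$ is pure of codimension $1$.

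It remains to organise the dichotomy. A strongly pseudoconvex, non-compact $W$ admits a Remmert reduction $W \to W'$ onto a Stein space; if $W$ is in fact $1$-complete, then $S \setminus \Sigma$ Stein with $\Sigma$ a divisor on the compact space $S$ forces $S$ to be Moishezon, so $a(S) = \dim S$. More generally, whenever $\dim S > p+1$, or $\dim S = p+1$ but $S$ is not simple, I would transport the resulting meromorphic functions (or the covering family of proper subvarieties of $S$) through the correspondence: pulling back along $q$ and pushing forward along the surjective proper map $p$ produces algebraically independent meromorphic functions on $X$, and a dimension count gives $a(X) \geq p+1$. If none of these functions is available --- precisely when $\dim S = p+1$ and $S$ is simple --- one is left with the second alternative, in which the convexity statement of the previous paragraph provides the remaining conclusions.

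The main obstacle is the convexity transfer in the second step: identifying the exact Andreotti--Grauert index of $X \setminus Y$ from the ampleness of $\sN_{Y/X}$, and proving (or correctly invoking) that the cycle space $\sC_q(X \setminus Y)$ is strongly pseudoconvex, is the technical heart of the argument. A secondary difficulty is the bookkeeping in the dichotomy: one must show that any failure of ``$\dim S = p+1$ and $S$ simple'' genuinely produces $p+1$ independent meromorphic functions on $X$ and not merely on $S$, which requires controlling the behaviour of the push-forward along $p$ and, in the case $\dim S > p+1$, passing to a suitable $(p+1)$-dimensional subfamily that still covers $X$.
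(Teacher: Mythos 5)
The paper does not prove this statement at all: it is quoted, with attribution, from Barlet--Magnusson \cite{BM04} and used as a black box, so there is no internal proof to compare yours against. Judged on its own merits, your sketch points at the right circle of ideas --- Schneider's $(q+1)$-convexity of $X\setminus Y$ and Barlet's convexity theorem for cycle spaces are indeed the tools behind \cite{BM04} (whose very title, ``Integration of meromorphic cohomology classes,'' names the actual mechanism: one integrates over the cycles $Z_s$ a meromorphic cohomology class of bidegree $(q+1,q+1)$ with poles along $Y$, built from the ample normal bundle, and reads off both the meromorphic functions and the pseudoconvexity from the result). But two steps of your outline are genuine gaps, and they are exactly the steps where the ampleness hypothesis has to do work.

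First, your argument that $\Sigma\neq S$ and that $\Sigma$ has pure codimension $1$ does not stand. ``The general $Z_s$ meets $Y$ in the expected dimension $-1$'' is an expected-dimension heuristic that is simply false for a fixed analytic family: a subvariety of codimension $q+1$ can meet every member of a covering family of $q$-cycles (for instance if all cycles pass through a point of $Y$), and the fibres of $p$ over points of $Y$ need not have the generic dimension, so $\dim p^{-1}(Y)$ need not equal $\dim S-1$, nor need $p^{-1}(Y)\to\Sigma$ be generically finite. In \cite{BM04} the set $\Sigma$ is identified with the polar set of the integrated class, which is what forces it to be a divisor; nothing in your dimension count replaces that. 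Second, the dichotomy is not obtained by ``pushing forward'' meromorphic functions from $S$ to $X$ along the surjective map $p$: meromorphic functions do not push forward along surjections, and even a trace construction through the correspondence requires knowing that the functions on $S$ separate the cycles through a general point of $X$; in the first alternative of \cite{BM04} the $p+1$ independent functions on $X$ come out of the integration construction directly, not by transport from $S$. There is also a smaller unaddressed point in your convexity transfer: to inherit strong pseudoconvexity, $S\setminus\Sigma$ must sit in the cycle space of $X\setminus Y$ as a closed (not merely locally closed) subspace, which requires an argument that limits of cycles avoiding $Y$ stay away from $Y$ or land in $\Sigma$. So your proposal is a plausible reconstruction of the strategy, but the places where $N_{Y/X}$ ample must be used are precisely the places your argument is missing.
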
 

Specifying to $ p = 1,$ we obtain from Theorem~\ref{BM} 

\begin{corollary} \label{p1} Let $X$ be a compact K\"ahler manifold of dimension $n$ and $Y \subset X$ a smooth curve (or $1-$dimensional local complete intersection) with ample normal bundle. 
Suppose that $X$ is covered by subvarieties of codimension $2$. Then $a(X) \geq 2.$ 
\end{corollary}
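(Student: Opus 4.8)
The plan is to apply Theorem~\ref{BM} with $p = 1$ and then eliminate its exceptional alternative. Since $Y$ is a curve we have $p = 1$, and the hypothesis that $X$ is covered by subvarieties of codimension $2$ furnishes a covering family $(Z_s)_{s \in S}$ of $q$-cycles with $q = n-2$, so that $p + q + 1 = n$ as required. Theorem~\ref{BM} then gives one of two conclusions: either $a(X) \geq p+1 = 2$, which is exactly the assertion, or the exceptional case holds, in which $S$ is simple of dimension $2$, the incidence locus $\Sigma = \{s \in S \mid Z_s \cap Y \ne \emptyset\}$ has pure codimension $1$ in $S$, and $S \setminus \Sigma$ is strongly pseudo-convex. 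So the whole task is to show that this exceptional case cannot occur.

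First I would record that $\Sigma$ is nonempty: the cycles $Z_s$ cover $X$ and $Y \subset X$ is nonempty, so at least one member meets $Y$. Hence $\Sigma$ is a nonempty divisor on the compact surface $S$. The heart of the argument is then to convert the strong pseudo-convexity of $S \setminus \Sigma$ into positivity of $\Sigma$. Modelling a punctured tubular neighborhood of $\Sigma$ on the complement of the zero section in the total space of $N_{\Sigma/S}$, the function $-\log \|v\|^2$ is an exhaustion of the end near $\Sigma$ whose Levi form equals the Chern curvature of $N_{\Sigma/S}$. Since the only noncompact ends of $S \setminus \Sigma$ sit over $\Sigma$, strong pseudo-convexity forces this form to be positive near $\Sigma$, i.e. $N_{\Sigma/S}$ is positive and $\Sigma^2 > 0$ on $S$.

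Finally, a divisor $\Sigma$ with $\Sigma^2 > 0$ on a compact surface makes $\mathcal{O}_S(\Sigma)$ big: Riemann--Roch gives $\chi(\mathcal{O}_S(m\Sigma)) = \tfrac{m^2}{2}\Sigma^2 + O(m) \to \infty$, while by Serre duality $h^2(\mathcal{O}_S(m\Sigma)) = h^0(K_S - m\Sigma) = 0$ for $m \gg 0$ because $(K_S - m\Sigma) \cdot \Sigma < 0$. Hence $h^0(\mathcal{O}_S(m\Sigma))$ grows like $m^2$ and $S$ is Moishezon, therefore projective, so $a(S) = 2$. This contradicts the simplicity of $S$, which forces $a(S) = 0$. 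Thus the exceptional alternative of Theorem~\ref{BM} is impossible and $a(X) \geq 2$.

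I expect the main obstacle to be the middle step, namely making rigorous that strong pseudo-convexity of $S \setminus \Sigma$ yields $\Sigma^2 > 0$. The delicate points are to localize the convexity genuinely at the end over $\Sigma$ (rather than at some interior exceptional set of the $1$-convex surface $S \setminus \Sigma$) and to handle the possibly reducible or non-reduced structure of $\Sigma$, for which the normal-bundle model and the Riemann--Roch estimate must be applied to the divisor $\Sigma$ as a whole. The remaining steps are essentially bookkeeping with Theorem~\ref{BM} and standard surface theory.
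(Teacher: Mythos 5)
Your reduction to excluding the exceptional alternative of Theorem~\ref{BM} is exactly the paper's set-up, but the step you yourself flag as the main obstacle --- that strong pseudo-convexity of $S \setminus \Sigma$ forces $N_{\Sigma/S}$ to be positive, hence $\Sigma^2 > 0$ --- is not just delicate, it is false as a general statement. Serre's classical example gives a ruled surface $S = \bP(E)$ over an elliptic curve, with $E$ the non-split extension of $\sO$ by $\sO$, and a section $\Sigma$ with $N_{\Sigma/S} \cong \sO_\Sigma$, so $\Sigma^2 = 0$; yet $S \setminus \Sigma$ is Stein, in particular strongly pseudo-convex. The heuristic behind your Levi-form computation breaks down in two places: a punctured tubular neighborhood of $\Sigma$ is not biholomorphic to the complement of the zero section of $N_{\Sigma/S}$ (linearization of neighborhoods is obstructed), and, more importantly, strong pseudo-convexity only asserts the existence of \emph{some} strictly plurisubharmonic exhaustion, not that the particular function $-\log\|v\|^2$ is one; in the flat-normal-bundle model that function is pluriharmonic, and the complement can nevertheless carry a strictly psh exhaustion coming from elsewhere, as Serre's example shows. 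So no local argument at the end over $\Sigma$ can yield $\Sigma^2 > 0$, and the conclusion that $S$ is Moishezon is in any case stronger than what is true ($a(S) \geq 1$ is all one can expect, and all one needs).

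The paper excludes the exceptional case by a global argument that uses the simplicity of $S$ from the start. After normalizing $S$ (a point you also skip: $S$ need not be smooth) and blowing up the finitely many singular points, one invokes Grauert's solution of the Levi problem: a strongly pseudo-convex surface is a proper modification of a Stein space and therefore carries a non-constant holomorphic function $f$ on $S \setminus \Sigma$. If $a(S) = 0$, the minimal model $S_0$ of $S$ is a torus or a K3 surface with $a(S_0)=0$; after contracting the $(-2)$-curves in the K3 case, $S_0$ carries no curves at all, so $\Sigma$ maps to a finite set of points and $f$ descends to a non-constant holomorphic function on the compact normal surface $S_0$ minus finitely many points. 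Such a function extends by normality and is therefore constant --- a contradiction. This is the global input (classification of surfaces with algebraic dimension zero and the scarcity of their curves) that your local normal-bundle argument cannot replace.
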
 

\begin{proof} By our assumption there is a covering family $(Z_s)_{s \in S}$ of $(n-2)
-$cycles. 
We apply the theorem of Barlet-Magnusson and need to exclude the second alternative in Theorem~\ref{BM}. So assume that $\dim S = 2, $ that $S$ is simple,
the set $\Sigma $ has dimension $1$ with strongly pseudo-convex complement $S \setminus \Sigma$. Now normalize $S$ and apply the following lemma to produce a
contradiction. 
\end{proof}

\begin{lemma} Let $S$ be a normal compact surface whose desingularisation is K\"ahler. Assume that there is an effective curve $\Sigma \subset S$
such that $S \setminus \Sigma $ is strongly pseudo-convex. Then $a(S) \geq 1.$ 
\end{lemma}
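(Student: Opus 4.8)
The plan is to reduce to the smooth case via the given Kähler desingularization, then show that strong pseudoconvexity of the complement prevents $\Sigma$ from being contractible, and finally invoke the classification of Kähler surfaces to convert non-contractibility into positive algebraic dimension.

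First I would reduce to a smooth surface. Let $\pi \colon \tilde S \to S$ be the given desingularization, with $\tilde S$ compact Kähler. Since the algebraic dimension is a bimeromorphic invariant, $a(S) = a(\tilde S)$, so it suffices to prove $a(\tilde S) \geq 1$. Set $\tilde\Sigma = \pi^{-1}(\Sigma)$, an effective curve on $\tilde S$. Over $S \setminus \Sigma$ the map $\pi$ is a proper modification, so $\tilde S \setminus \tilde\Sigma = \pi^{-1}(S \setminus \Sigma)$ is a resolution of $S \setminus \Sigma$. As $S \setminus \Sigma$ admits a proper map onto a Stein space (its Remmert reduction), so does $\tilde S \setminus \tilde\Sigma$, via composition; hence $\tilde S \setminus \tilde\Sigma$ is again strongly pseudoconvex. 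Thus I may assume $S$ is a smooth compact Kähler surface and $\Sigma$ an effective curve with $S \setminus \Sigma$ strongly pseudoconvex.

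Next I would show that the intersection matrix $(\Gamma_i \cdot \Gamma_j)$ of the irreducible components $\Gamma_i$ of $\Sigma$ is \emph{not} negative definite. Suppose it were. By Grauert's contractibility criterion there is a neighborhood $U$ of $\Sigma$ and a proper modification $c \colon U \to U'$ onto a normal surface, contracting $\Sigma$ to a single point $p$ and restricting to an isomorphism $U \setminus \Sigma \cong U' \setminus \{p\}$. But a deleted neighborhood of a normal surface singularity is strongly pseudoconcave near $p$: by normality (Hartogs extension in codimension two) every plurisubharmonic function bounded above near $p$ extends across $p$, so no strongly plurisubharmonic exhaustion can tend to $+\infty$ along this end. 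Strong pseudoconvexity of $S \setminus \Sigma$, on the other hand, supplies exactly such an exhaustion near the end $\Sigma$, i.e. near $p$. This contradiction shows $\Sigma$ is not negative definite. Finally, I would invoke the classical fact that on a compact Kähler surface with $a(S) = 0$ every effective curve has negative-definite intersection matrix — equivalently, such a surface (being bimeromorphic to a $K3$ surface or a torus by the Enriques--Kodaira classification) carries only finitely many curves in negative-definite configurations. Were $a(S) = 0$, this would force $\Sigma$ to be negative definite, contradicting the previous step; hence $a(S) \geq 1$.

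The main obstacle is the geometric incompatibility used in the second step: one must rule out that the pseudoconvex end of $S \setminus \Sigma$ at $\Sigma$ can coincide with the pseudoconcave end of a normal surface singularity. Making precise that a single end cannot be simultaneously strongly pseudoconvex and pseudoconcave is where the Andreotti--Grauert theory and the maximum principle for plurisubharmonic functions do the real work; the remaining ingredients are bimeromorphic bookkeeping and a citation to the classification of Kähler surfaces.
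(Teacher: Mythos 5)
Your overall strategy (contradiction via the classification of K\"ahler surfaces with $a=0$, plus an incompatibility between the strongly pseudoconvex end of $S\setminus\Sigma$ and a contracted normal point) is sound and close in spirit to the paper's, but the justification of your key step is a non sequitur as written. You argue that no strictly plurisubharmonic exhaustion of $S\setminus\Sigma$ can tend to $+\infty$ at the contracted point $p$ \emph{because} plurisubharmonic functions bounded above near $p$ extend across $p$. That extension theorem says nothing here: an exhaustion tending to $+\infty$ at $p$ is precisely \emph{not} bounded above near $p$, so it is not in the scope of the extension statement, and indeed on a punctured disc the harmonic function $-\log|z|$ shows that one-variable slicing cannot rule out such behaviour either. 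The fact you need is true, but it is not a formal consequence of Hartogs extension for psh functions; the honest route is through holomorphic functions. Namely: strong pseudoconvexity of $S\setminus\Sigma$ gives, by Grauert, a proper modification onto a positive-dimensional Stein space, hence a non-constant holomorphic function $f$ on $S\setminus\Sigma\cong S'\setminus F$ ($F$ the finite image of $\Sigma$ under the Grauert contraction); by normality of $S'$, $f$ extends across $F$ and is then constant on the compact surface $S'$ --- the desired contradiction. As it stands, your ``pseudoconcave end versus pseudoconvex end'' paragraph is the one place where real work is required, and the argument offered there does not do that work.

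Once this step is repaired, your proof is a legitimate variant of the paper's. The paper does not contract $\Sigma$ itself: it passes to a minimal model $S_0$ (a torus, or a K3 with all $(-2)$-curves contracted), observes that under the assumption $a(S)=0$ the surface $S_0$ carries no curves at all, and then transports the non-constant holomorphic function on $S\setminus\Sigma$ to $S_0$ minus a finite set, where the same normality-plus-compactness extension argument kills it. Your version instead contracts only $\Sigma$, which requires two extra inputs --- the negative definiteness of curve configurations on an $a=0$ K\"ahler surface and Grauert's contractibility criterion --- but avoids discussing minimal models. Both routes funnel into the identical final contradiction, so I would encourage you to state that final step (Grauert's solution of the Levi problem plus Riemann--Hartogs extension of holomorphic functions on a normal surface) explicitly rather than appealing to a pseudoconcavity principle for plurisubharmonic functions that, in the form you quote it, does not apply.
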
 

\begin{proof} Since $S$ has only finitely many singularities, we may blow up and assume from the beginning that $S$ is smooth. 
Let $\tau: S \to S_0$ be a minimal model. Then, arguing by contradiction, $S$ is either a torus or a K3-surface with $a(S_0) = 0.$ In the K3 case, 
we contract all $(-2)-$curves and call the result again $S_0.$ Thus in both cases, $S_0$ is a (normal) surface with any curves. 
By our assumption, we find a non-constant holomorphic function $f \in \sO(S \setminus \Sigma).$ This function yields a non-constant holomorphic function on $S_0$
outside a finite set, which is absurd. 
\end{proof} 

\begin{remark} {\rm Corollary~\ref{p1} should be true for all $p.$ For this we would need to prove the following. 
\vskip .2cm \noindent
{\it Let $X$ be a normal compact K\"ahler space, $\Sigma \subset  X$ purely $1-$codimensional such that $X \setminus \Sigma$ 
strongly pseudo-convex. Then $X$ cannot be simple.} 
\vskip .2cm \noindent
Assume $X$ is simple and $\dim X = 3$. As explained in Section 2, $X$ should be bimeromorphic to $T/G$, 
where $T$ is a simple torus and $G$ a finite group. We verify the above assertion in this case. So let $\Sigma \subset X$ be purely 1-codimensional such that 
$X \setminus \Sigma$ is strongly pseudo-convex. 
Let $\pi: \hat X \to X$ be bimeromorphic such that $\hat X$ admits a holomorphic bimeromorphic map $f: \hat X \to T/G.$ Let $\hat \Sigma $
be the preimage of $\Sigma. $ Then $\hat X \setminus \hat \Sigma $ carries non-constant holomorphic functions. On the other hand,
$\dim f(\hat \Sigma) = 0.$ This is a contradiction. \\
Of course, this argument holds in all dimensions.} 
\end{remark}

In the following we adress the question whether in Theorem~\ref{OP} the case $a(X) = 2$ can really occur, following ideas in [OP04].

\begin{ps-example} {\rm Let $X$ be a smooth compact K\"ahler threefold with $a(X) = 2.$ \\
We assume that we have a holomorphic algebraic reduction  
$$f: X \to S $$ to a smooth projective surface $S$
with the following properties:
\begin{enumerate}
\item there is an irreducible curve $B \subset S$ with $B^2 > 0$ whose preimage $X_B = f^{-1}(B)$ is irreducible;
\item the general fiber of $f \vert X_B $ is a singular rational curve (with a simple cusp or node);
\item $X_B$ is projective.
\end{enumerate} 
\vskip .2cm 
Notice that $X_B$ is always Moishezon, but the projectivity is not automatic. 
\vskip .2cm
Given these data, we choose a general hyperplane section $C \subset X_B$, hence $C$ is a local complete intersection in $X.$ 
Furthermore we have an exact sequence of vector bundles
$$ 0 \to N_{C/X_B} \to N_{C/X} \to N_{X_B/X} \vert C \to 0. $$
Since $N_{X_B/X} \vert C = f^*(N_{B/S}) \vert C$ is ample, the bundle $N_{C/X}$ is ample, too. \\
If $B$ is smooth, so does $C$. 
\vskip .2cm 
Certainly a K\"ahler threefold $X$ with the first two conditions must exist, although an explicit construction seems not so easy. 
Also it is plausible that the third condition should hold in certain cases.  \\
\vskip .2cm 
It is easy to fulfill all conditions if one allows $B^2 = 0.$ Here are the details. 
We start with a K\"ahler surface $S_1$ of algebraic dimension 1 with algebraic reduction $f_1: S_1 \to T = \bP_1.$ We may choose $S_1$
such that there is a point $x_0 \in T$ so that the fiber $f_1^{-1}(x_0)$ is an irreducible rational curve with a simple cusp or node. 
Let $S_2 $ be $\bP_2$ blown up in 9 points so that there is an elliptic fibration $f_2: S_2 \to T$.
Set  
$$ X = S_1 \times_T S_2. $$ 
Here we have arranged things so that $X$ is smooth, by arranging $f_2$ to be smooth over the singular set of $f_1.$  
The projection $h: X \to S_2$ is the algebraic reduction; in particular $a(X) = 2.$ 
Let $B = f^{-1}_2(x_0) $, an elliptic curve, and 
observe that $X_B  \simeq f_1^{-1}(x_0) \times B$ which is projective. }
\end{ps-example} 

If $Z$ is a subvariety with ample normal sheaf moving in a covering family, things get much easier. 

\begin{theorem} Let $X$ be a compact K\"ahler manifold and $Z \subset  X$ an irreducible reduced subspace with ample normal sheaf.
Assume that $Z$ moves in a generically irreducible and reduced family $(Z_s)_{s \in S}$ which covers $X.$ 
Then $X$ is projective.
\end{theorem}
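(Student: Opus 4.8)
The plan is to combine Proposition~\ref{ampleopen} with a fibre-space argument over the parameter space, reducing the projectivity of $X$ to the Moishezon property of the base of the family, and then to extract that property from the ampleness of the normal sheaf.

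First I would run Proposition~\ref{ampleopen}, which already produces, from a desingularization $\hat q\colon \hat U \to \tilde S$ of the graph, a line bundle $L$ on $\hat U$ that is ample on the general fibre $\hat Z_s$; in particular the general $Z_s$ is Moishezon. Since $X$ is K\"ahler and, by Fujiki--Varouchas, the relevant component of the Barlet cycle space (hence $\tilde S$) lies in class $\sC$, the total space $\hat U \subset X \times S$ lies in class $\sC$ as well. Next I would normalize the geometry: the family covers $X$, so $p\colon \hat U \to X$ is surjective, and after replacing $S$ by a general subvariety of dimension $\dim X - \dim Z$ I may assume $p$ is generically finite. Then $\bC(X)\hookrightarrow \bC(\hat U)$ is a finite extension, so $a(\hat U)=a(X)$, while $\dim \hat U=\dim X$ and $\dim \tilde S = \dim X-\dim Z$.

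On the other hand $L$ is relatively ample over a Zariski-open subset of $\tilde S$, so $\hat q$ is relatively Moishezon and the tower formula for transcendence degrees yields
$$ a(\hat U)=a(\tilde S)+\dim Z. $$
Combining with $a(\hat U)=a(X)$ and $\dim X=\dim\tilde S+\dim Z$, I obtain that $X$ is Moishezon precisely when $a(\tilde S)=\dim\tilde S$, i.e. when $\tilde S$ is Moishezon. Since $X$ is K\"ahler and a Moishezon K\"ahler manifold is projective, the entire problem reduces to showing that the parameter space $\tilde S$ is Moishezon.

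The main obstacle is exactly this last step, and here the ampleness of $\sN_{Z/X}$ must enter in an essential way: Moishezon-ness of the fibres alone is insufficient, as $X=T\times \bP_1$ with $T$ a non-algebraic two-torus shows (the curves $\{t\}\times\bP_1$ form a covering family of projective fibres, yet their normal bundle is only trivial and $X$ is not projective, the base $T$ being non-algebraic). To exploit ampleness I would build a big integral class on $\tilde S$ by fibre integration: the natural candidate is the Deligne-pairing line bundle $\lambda=\langle L,\dots,L\rangle$, whose first Chern class is the integral $(1,1)$-class $\hat q_*\bigl(c_1(L)^{\dim Z+1}\bigr)$ (possibly folding in $\det\sN_{Z_s/X}$ to correct the transversal terms). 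This class is manifestly rational, so it suffices to prove it is big, i.e. has positive top self-intersection on $\tilde S$; then $\tilde S$, being in class $\sC$ with a big line bundle, is Moishezon and we are done. Positivity along the fibre directions is automatic from relative ampleness; the genuinely hard point, which I expect to be the technical heart of the proof, is positivity in the base directions. This is where the Kodaira--Spencer inclusion $T_s\tilde S \hookrightarrow H^0(Z_s,\sN_{Z_s/X})$ together with the ampleness of $\sN_{Z_s/X}$ should guarantee that the family varies positively transversally, so that $\hat q_*\bigl(c_1(L)^{\dim Z+1}\bigr)$ does not degenerate as it does in the trivial-normal-bundle example above. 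Converting ``ample normal sheaf'' into this quantitative transversal positivity on $\tilde S$ is the step I anticipate will require the most care.
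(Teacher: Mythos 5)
There is a genuine gap, and you have in fact located it yourself: the entire argument funnels into the claim that the pushed-forward class $\hat q_*\bigl(c_1(L)^{\dim Z+1}\bigr)$ is big on $\tilde S$, and you explicitly leave that step as something that ``should'' follow from the Kodaira--Spencer map and ``will require the most care.'' That is not a proof; it is a restatement of the difficulty. Converting ampleness of $\sN_{Z_s/X}$ into transversal positivity of a fibre integral is exactly the kind of quantitative statement that does not come for free (and your own example $T\times\bP_1$ shows the fibre-integration class can degenerate, so some genuinely new input is needed at that point). There are also secondary problems in the reduction itself: you cannot in general ``replace $S$ by a general subvariety of dimension $\dim X-\dim Z$'' to make $p$ generically finite, because the parameter space of a covering family on a non-algebraic K\"ahler manifold may be simple, i.e.\ carry no subvarieties through a general point at all; and the tower formula $a(\hat U)=a(\tilde S)+\dim Z$ needs justification when $L$ is only ample on the general fibre rather than relatively ample over all of $\tilde S$.

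The missing idea is that one should argue through \emph{algebraic connectedness} rather than through the algebraic dimension of the parameter space. After Proposition~\ref{ampleopen} gives that the general $Z_s$ is Moishezon, the paper splits into two cases using Campana's results. If the family is connecting (two general points of $X$ joined by chains of members $Z_s$), then $X$ is algebraically connected and hence projective by \cite{Ca81} --- no statement about $S$ is needed. If it is not connecting, one forms the quotient of the family, an almost holomorphic map $f:X\dasharrow W$ with $\dim W>0$, so that the general $Z_s$ lies inside a compact fibre $X_w$; the resulting generically surjective map
$$ \sN_{Z_s/X}\to \sN_{X_w/X}\vert Z_s\simeq \sO_{Z_s}^{\oplus k} $$
onto a trivial sheaf contradicts ampleness of the normal sheaf. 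This is where the ampleness hypothesis actually enters, in a purely qualitative way, and it entirely bypasses the bigness-on-$\tilde S$ problem on which your proposal is stuck.
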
 

\begin{proof} By Proposition \ref{ampleopen} the general $Z_s$ is Moishezon. 
We may assume that the family $(Z_s)$ is not connecting, i.e. two general points cannot be connected by a chain of curves $Z_s,$
otherwise $X$ is already projective by Campana \cite{Ca81}, since then
$X$ is {\it algebraically} connected. 
Hence we may consider the quotient of the family, yielding an almost holomorphic map $f: X \dasharrow W$, which contracts
two general points to the same point in $W$ iff they can be joined by a chain of members $Z_s$; cp. \cite{Ca81}, \cite{Ca04}. 
Since the family is not connecting, we have $\dim W > 0.$  Now the general $Z_s$ is contained in a compact fiber $X_w.$ Thus 
we get a generically surjective map
$$ \sN_{Z_s/X} \to \sN_{X_w/X} \vert Z_s \simeq \sO_{Z_s}^{\oplus k}. $$
This contradicts the ampleness of $ \sN_{Z_s/X}.$
\end{proof}

Notice that the ampleness of the normal sheaf of $Z$ does not necessarily means that even a multiple of $Z$ moves; see \cite{FL82}
for an example. \\
We adress finally the interesting case that $X$ is a hyperk\"ahler manifold. 

\begin{theorem} \label{HK} Let $X$ be a compact hyperk\"ahler manifold, $C \subset X$ an irreducible curve with ample normal sheaf. 
If $a(X) \geq 1,$ then $X$ is projective.
\end{theorem}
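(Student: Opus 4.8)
The plan is to reduce everything to Huybrechts' projectivity criterion for compact hyperk\"ahler manifolds: $X$ is projective as soon as $\mathrm{NS}(X)$ contains a class of positive Beauville--Bogomolov--Fujiki (BBF) square $q$. So I would argue by contradiction and assume $X$ is not projective. Then no integral, hence (by Lefschetz on $(1,1)$-classes and continuity of the open condition $q>0$) no real class in $\mathrm{NS}(X)_{\bR}$ has positive square; since $q$ has signature $(1,\ast)$ on $H^{1,1}(X,\bR)$, this forces $q\vert_{\mathrm{NS}(X)_{\bR}}$ to be negative semi-definite. The device that brings the curve into this cohomological picture is the BBF-dual of $C$. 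As $q$ is non-degenerate on $H^2(X,\bR)$, there is a unique class $\gamma_C$ with
$$ q(\gamma_C,\alpha)=\alpha\cdot C \quad\text{for all } \alpha\in H^2(X,\bR). $$
Writing $\dim X=2n$, the cycle class $[C]$ is of pure type $(2n-1,2n-1)$, so it pairs to zero with the symplectic form $\sigma$ and with $\bar\sigma$; hence $\gamma_C$ is $q$-orthogonal to $H^{2,0}\oplus H^{0,2}$ and therefore lies in $H^{1,1}(X,\bR)$, in fact in $\mathrm{NS}(X)_{\bQ}$ by rationality of $[C]$ and of $q$.

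Next I would use the hypothesis $a(X)\ge 1$ to manufacture a class in the radical. The algebraic reduction is a dominant meromorphic map onto a positive-dimensional projective variety; composing with a general pencil produces $f\colon X\dasharrow \bP_1$ whose general fibre $F$ is a prime divisor moving in a pencil without fixed part. Put $L=\sO_X(F)$. This class is nonzero and movable, so on a hyperk\"ahler manifold $q(L)\ge 0$; and $q(L)>0$ would make $L$ big, hence $X$ Moishezon and so projective, which is excluded. Thus $q(L)=0$. Since $L\in\mathrm{NS}(X)_{\bR}$ and $q\vert_{\mathrm{NS}}$ is negative semi-definite, the Cauchy--Schwarz inequality for semi-definite forms (equivalently, $L$ lies in the radical) gives $q(L,\gamma_C)^2\le q(L)\,q(\gamma_C)=0$, whence $q(L,\gamma_C)=0$; that is, $L\cdot C=0$.

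Finally I would derive the geometric contradiction from the ample normal sheaf. The members of the pencil cover $X$, hence cover $C$; and $C$ cannot be contained in a fibre $F$ (nor in the base locus lying in every fibre), for a containment $C\subset F$ would produce a surjection $\sN_{C/X}\to \sN_{F/X}\vert C$ onto the restriction of the fibre normal bundle, which is trivial because $F$ is a fibre of a map to a curve — contradicting the ampleness of $\sN_{C/X}$, exactly as in the covering-family theorem above. Therefore a general point of $C$ lies on a member meeting $C$ properly, so $L\cdot C=F\cdot C>0$, contradicting $L\cdot C=0$. Hence $X$ is projective. The conceptual core is the dual class $\gamma_C$, which converts the intersection number $L\cdot C$ into the BBF pairing $q(L,\gamma_C)$ so that negative semi-definiteness annihilates it; I expect the steps needing the most care to be the two positivity inputs — that a movable divisor class has $q\ge 0$, and that the ample-normal-bundle curve is genuinely not absorbed into the fibres or the base locus of the reduction, so that $L\cdot C>0$.
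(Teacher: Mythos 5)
Your argument is correct in substance but takes a genuinely different route from the paper's. The paper outsources the first half to [COP10] (3.4): for a non-projective hyperk\"ahler $X$ with $a(X)\geq 1$ the reflexive pull-back $\sL=(\pi_*f^*A)^{**}$ of an ample bundle on the algebraic reduction is nef and effective with $\sL\cdot C=0$, and the contradiction is then drawn from Theorem~\ref{griffiths} resp.\ Corollary~\ref{cor1}, i.e.\ from the Andreotti--Grauert $(n-1)$-convexity of the complement of $C$. You instead stay entirely inside hyperk\"ahler cohomology: Huybrechts' projectivity criterion makes $q$ negative semi-definite on ${\rm NS}(X)_{\bR}$, the $q$-dual class $\gamma_C\in{\rm NS}(X)_{\bQ}$ (exactly the class $u$ of the Remark following the theorem) converts $L\cdot C$ into $q(L,\gamma_C)$, and Cauchy--Schwarz kills it once $q(L)=0$ --- so you are in effect re-proving the [COP10] input rather than citing it. For the endgame you replace the convexity machinery by the elementary fact that the moving divisors cover $C$; this is precisely the unlabelled Lemma on effective nef divisors in Section~4, and two small imprecisions should be fixed accordingly: $\sN_{D_t/X}\vert C=L\vert C$ is not literally trivial where $C$ meets the base locus of the pencil (it has degree $L\cdot C=0$, which is all you need against ampleness of $\sN_{C/X}$), and if $C\subset{\rm Sing}(D_t)$ the map $\sN_{C/X}\to\sN_{D_t/X}\vert C$ may vanish, so one needs the power-series refinement used in that Lemma. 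What your route buys: it avoids the Andreotti--Grauert apparatus entirely (whose quoted form assumes $X$ projective and $C$ smooth, neither of which is literally available here), and it works for an arbitrary irreducible $C$; the price is that you must supply the standard but not-quite-free fact that a fixed-part-free pencil on a hyperk\"ahler manifold has $q\geq 0$, which you assert rather than prove.
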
 

\begin{proof} Suppose $X$ not projective. 
Following some arguments in [COP10] (3.4), let $g: X \dasharrow B$ be an algebraic reduction, $\pi: \hat X \to X$ a bimeromorpic
map from a compact K\"ahler manifold $\hat X$ such that the induced map $f:\hat X \to B$ is holomorphic. 
Fix an ample line bundle $A$ on $B$ and set
$$ \sL = (\pi_* f^*(A))^{**}. $$
Then by [COP10] (3.4), the line bundle $\sL$ is nef with $\sL \cdot C = 0$. 
Since $\sL$ is effective, this contradicts Theorem \ref{griffiths} resp. Corollary~\ref{cor1} below. 
\end{proof} 

\begin{remark} {\rm Theorem~\ref{HK} should hold without the assumption that $a(X) \geq 1.$ In other words, a hyperk\"ahler manifold $X$ of
dimension $2n$ 
containing an irreducible curve $C$ with ample normal sheaf should be projective. Let $q_X$ be the Beauville form. Then we have an 
isomorphism
$$ \iota: H^{1,1}(X,\bQ) \to H^{2n-1,2n-1}(X,\bQ), $$
see [COP10], p.411, for details. In particular, there exists $u \in H^{1,1}(X,\bQ)$ such $\iota(u) = [C], $ which is to say that 
$$ a \cdot C = q_X(a,u) $$
for all $a \in H^{1,1}(X,\bQ).$ 
Since $u$ is a rational class, there exists a positive rational number $\lambda$ and a line bundle $L$ such that $u = \lambda c_1(L).$ 
The hope now is that the positivity of the normal sheaf of $C$ implies that $L$ is nef (and that $L$ is semi-ample). }  
\end{remark}

If $C \subset X$ is a smooth curve with small genus and ample normal bundle, we have the following algebraicity result \cite{OP04}:

\begin{proposition} Let $X$ be a compact K\"ahler manifold and $C \subset X$ a smooth curve with ample normal bundle. 
If $g(C) \leq 1,$ the manifold $X$ is projective.
\end{proposition}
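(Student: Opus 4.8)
The plan is to reduce the statement to a deformation-theoretic assertion: I will show that $C$ moves in a family covering $X$, and then quote the theorem proved above, that a subvariety with ample normal sheaf moving in a covering family forces $X$ to be projective. The genus hypothesis enters only through the obstruction space. Indeed, $H^1(C,N_{C/X})=0$ for $g(C)\le 1$: by Serre duality $H^1(C,N_{C/X})\cong H^0(C,N_{C/X}^*\otimes K_C)^*$, and $N_{C/X}^*$ carries no nonzero section, since such a section would saturate to a sub-line-bundle of non-negative degree, dualizing to a quotient of $N_{C/X}$ of non-positive degree and contradicting ampleness; for $g(C)\le1$ the twist by $K_C$ only lowers the degree, so the group vanishes. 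Hence the deformations of $C$ are unobstructed and the Douady space is smooth at $[C]$ of dimension $h^0(C,N_{C/X})$.

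The heart of the matter is to show that this family covers $X$. For $g(C)=0$ it is immediate: an ample bundle on $\bP_1$ is a sum of $\sO(a_i)$ with $a_i\ge1$, hence globally generated, so the evaluation $H^0(C,N_{C/X})\otimes\sO_C\to N_{C/X}$ is surjective and the deformations sweep out a neighbourhood of $C$, thus all of $X$. For $g(C)=1$ I would argue by contradiction. Suppose the deformations sweep out only a proper irreducible subvariety $Y\subset X$. Choosing a general member $C'\subset Y$ and using unobstructedness, every section of $N_{C'/X}$ integrates to a deformation which must remain in $Y$; therefore the restriction $H^0(C',N_{C'/X})\to H^0(C',N_{Y/X}\vert C')$ is zero. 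The normal-bundle sequence $0\to N_{C'/Y}\to N_{C'/X}\to N_{Y/X}\vert C'\to0$ then forces an injection $H^0(C',N_{Y/X}\vert C')\hookrightarrow H^1(C',N_{C'/Y})$. But $N_{Y/X}\vert C'$ is a quotient of the ample bundle $N_{C'/X}$, hence ample with nonzero sections, so $H^1(C',N_{C'/Y})\ne0$; while $C'$ covers $Y$, which should make $N_{C'/Y}$ positive enough (generically globally generated) to contradict this by Serre duality on the elliptic curve $C'$. Making this last contradiction precise -- after passing to a resolution of $Y$ to keep all sheaves locally free -- is the main obstacle of the proof.

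Once the covering property is in hand, the theorem on covering families with ample normal sheaf applies and $X$ is projective. I note an alternative, purely Hodge-theoretic conclusion: it suffices to show $H^0(X,\Omega^2_X)=0$, since then the Kähler cone lies in $H^{1,1}(X)$, which meets the dense rational lattice $H^2(X,\bQ)$, giving a rational Kähler class and hence projectivity. For $\omega\in H^0(X,\Omega^2_X)$, contraction defines a skew homomorphism $\phi\colon T_X\vert C\to\Omega^1_X\vert C$; skew-symmetry sends the sub $T_C$ into the conormal $N_{C/X}^*$, where it must vanish because $H^0(C,N_{C/X}^*)=0$, so $\phi$ factors as a map $N_{C/X}\to N_{C/X}^*$, and this is zero since $H^0(C,(N_{C/X}\otimes N_{C/X})^*)=0$. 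Thus $\omega$ vanishes at every point of $C$, and the covering property propagates this to $\omega\equiv0$. In either completion the essential difficulty is precisely the covering statement in the elliptic case.
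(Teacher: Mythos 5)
The paper itself gives no proof of this proposition: it is quoted from \cite{OP04}, and the remark immediately following it indicates that the actual argument runs through the rational quotient ($g=0$ forces rational connectedness; $g=1$ forces either rational connectedness or an \'etale multisection of a fibration over an elliptic curve), not through a covering family of deformations of $C$ itself. Your genus-zero case is complete and correct: there $N_{C/X}$ is globally generated with $H^1=0$, the deformations of $C$ fill up a neighbourhood and hence (by compactness of the relevant Douady component) all of $X$, and either the covering-family theorem of Section 3 or your Hodge-theoretic ending finishes the proof.

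The genus-one case, however, contains a genuine gap, and a more serious one than the single step you flag. The premise that the deformations of $C$ should cover $X$ cannot be extracted from ampleness alone: an indecomposable bundle of rank $r\geq 2$ and degree $d$ with $0<d<r$ on an elliptic curve is ample by Hartshorne's criterion but has $h^0=d<r$, so the evaluation map $H^0(C,N_{C/X})\otimes\sO_C\to N_{C/X}$ is nowhere surjective and the first-order deformations span only a proper subsheaf of $N_{C/X}$; nothing in your argument then prevents the swept-out locus $Y$ from being a proper subvariety. Worse, the contradiction you hope for in that situation provably does not close. Take $N_{C/X}$ indecomposable of rank $2$ and degree $1$: its unique section is nowhere vanishing and generates a sub-line bundle $N_{C/Y}\cong\sO_C$, which is globally generated and nevertheless has $H^1(C,\sO_C)\cong\bC\neq 0$, while $N_{Y/X}\vert C$ has degree $1$ and $h^0=1$, so the injection $H^0(C,N_{Y/X}\vert C)\hookrightarrow H^1(C,N_{C/Y})$ is perfectly consistent. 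Serre duality on an elliptic curve kills $H^1$ only for bundles whose dual has no sections, and generic global generation does not exclude trivial summands or quotients --- which is exactly what arises when $C$ moves in a fibered proper subvariety, i.e.\ precisely the alternative the cited result has to confront via the rational quotient. As written, your proof establishes only the $g=0$ case; the elliptic case needs a different idea.
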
 

Actually much more holds. If $g(C) = 0,$ then $X$ is rationally connected, and if $g(C) = 1,$ then either $X$ is rationally connected or
the rational quotient has $1-$dimensional image. In the latter case we have a holomorphic map $f: X \to W$ with rationally connected fiber
to an elliptic curve $W$ and $B$ is an \'etale multi-section. 

This can be generalised to higher dimensions in the following way.

\begin{theorem} \label{kappa} Let $X$ be a compact K\"ahler manifold and $Z \subset  X$ a compact submanifold with ample normal bundle. If 
$Z$ is uniruled, $X$ is projective. 
\end{theorem}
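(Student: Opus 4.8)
The starting point I would use is that $Z$ is automatically \emph{projective}: since $N_{Z/X}$ is ample as a vector bundle, its determinant $\det N_{Z/X}$ is an ample line bundle on $Z$, so $Z$ is projective by the Kodaira embedding theorem. Because $Z$ is moreover uniruled, Mori's bend-and-break then produces a covering family of free rational curves $\ell\subset Z$, for which $N_{\ell/Z}$ is globally generated, hence nef. I would feed this into the two exact sequences
$$0\to N_{\ell/Z}\to N_{\ell/X}\to N_{Z/X}\vert_\ell\to 0,\qquad 0\to T_\ell\to T_X\vert_\ell\to N_{\ell/X}\to 0.$$
Since $N_{\ell/Z}$ is nef and $N_{Z/X}\vert_\ell$ is ample, $N_{\ell/X}$ is nef with $\deg N_{\ell/X}>0$; consequently $T_X\vert_\ell$ is nef, so $\ell$ is a free rational curve in $X$ with $-K_X\cdot\ell=\deg N_{\ell/X}+2>0$. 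In particular the deformations of $\ell$ dominate $X$, so $X$ is uniruled.

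Next I would pass to the rational-connectedness (MRC) fibration $\varphi\colon X\dasharrow W$, whose base $W$ is not uniruled and which contracts every $\ell$. If $\dim W=0$, then $X$ is rationally connected, whence $H^0(X,\Omega^p_X)=0$ for all $p\geq 1$, in particular $h^{2,0}(X)=0$, and $X$ is projective by Kodaira's criterion. So assume $\dim W>0$, resolve $\varphi$ to a morphism, and let $X_w$ be a general compact fibre; along any subvariety contained in $X_w$ its normal bundle is trivial, $N_{X_w/X}\vert=\sO^{\oplus\dim W}$. Intersecting $Z$ with such a fibre and using that $\varphi$ contracts the rational curves sweeping out $Z$, one obtains from the fibration a surjection of $N_{Z/X}$, restricted to the positive-dimensional slice $Z\cap X_w$, onto a trivial bundle of rank $\dim W-\dim\varphi(Z)$. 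Ampleness of $N_{Z/X}$ forbids a nonzero trivial quotient, forcing $\dim\varphi(Z)=\dim W$: the projective manifold $Z$ dominates $W$.

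A compact Kähler manifold that is the dominant meromorphic image of a Moishezon (here projective) manifold is again Moishezon, so $W$ is Moishezon, and being Kähler it is projective. It then remains to deduce projectivity of $X$ from that of $W$: here $\varphi$ is a rationally connected fibration over a projective base, and I would conclude that such a total space is again projective. The step I expect to be the main obstacle is precisely this last implication, together with making the trivial-quotient argument rigorous: one must run the normal-bundle computation on the possibly singular and reducible fibres $Z\cap X_w$, replacing $X$ and the family by suitable resolutions in the spirit of Proposition \ref{ampleopen}, and one must justify the algebraicity of a rationally connected fibration over a projective base within the Kähler category. Once $Z$ is known to be projective and $X$ to be uniruled, the remaining steps are essentially formal.
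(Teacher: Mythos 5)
Your overall strategy coincides with the paper's: show $X$ is uniruled, pass to the rational quotient (MRC fibration) $f\colon X\dasharrow W$, show that $Z$ dominates $W$, and then deduce projectivity. The first part (freeness of the rational curves $\ell$ in $X$ via the two normal bundle sequences) is fine and more detailed than the paper, which simply asserts uniruledness of $X$. Your hands-on argument that $Z$ dominates $W$ is essentially the content of Theorem 3.7 of \cite{Pe06}, which the paper cites instead of reproving; your version can be made to work because the image of $N_{V/Z}$ in $N_{X_w/X}\vert_V\simeq T_{W,w}\otimes\sO_V$ is generically the \emph{constant} subspace $T_{\varphi(Z),w}$ (the slice $V=Z\cap X_w$ maps to the single point $w$), so the cokernel really is generically a trivial bundle of rank $\dim W-\dim\varphi(Z)$, and a generically surjective map from an ample sheaf onto $\sO^{\oplus k}$, $k>0$, is impossible. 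So far you are on the paper's track.

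The genuine gap is your last step. You propose to conclude from ``rationally connected fibration over a projective base $\Rightarrow$ total space projective,'' and you correctly flag this as the main obstacle: that implication is not a statement you can simply quote in the K\"ahler category (for general fibres the analogous statement is false, e.g.\ torus fibrations, and even for rationally connected fibres proving it directly would require exactly the argument you are missing). Moreover your detour through the projectivity of $W$ is not needed. The paper closes the argument with Campana's algebraic connectedness criterion \cite{Ca81}: a compact K\"ahler manifold in which any two (general) points can be joined by a chain of compact curves is projective. Here this applies because you already have all the ingredients in hand --- $Z$ is projective (as you showed via $\det N_{Z/X}$), $Z$ dominates $W$, and every fibre of $f$ is rationally connected and K\"ahler, hence projective. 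Given two general points $x_1,x_2$, connect each $x_i$ inside its (projective, connected) fibre to a point of $Z$, and then connect those two points inside the projective variety $Z$; this exhibits $X$ as algebraically connected, and \cite{Ca81} gives projectivity. Without this (or an equivalent) final ingredient the proof is incomplete.
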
 

\begin{proof} 
Since $Z$ is uniruled and the normal bundle $N_{Z/X}$ is ample, it is clear that $X$ is uniruled, too. Let $f: X \dasharrow W$ be 
``the'' rational quotient. Then we apply Theorem 3.7 in \cite{Pe06} to conclude that $Z$ dominates $W$ (we may apply Theorem 3.7, since
the essential ingredient Lemma 3.6 works also in the K\"ahler case). Since $Y$ is projective as well as the fibers of $f,$ we conclude 
that any two points of $X$ can be joined by a chain of compact curves, hence $X$ is projective by \cite{Ca81}. 
\end{proof}

\begin{remark} {\rm Theorem~\ref{kappa} should remain true if we assume $\kappa (Z) < \dim Y$ instead of the uniruledness of $Z$. 
In fact, the K\"ahler version of Theorem 3.2 in \cite{Pe06} proves that at least $K_X$ is not pseudo-effective. It is conjectured, but completely
open in dimension at least $4$, that then $X$ is uniruled. Once we know the uniruledness, we conclude as before. \\
One can also prove versions of the preceeding theorem weakening the ampleness condition.} 
\end{remark}

\section{Curves with ample normal bundles and the cone of curves} 
\setcounter{lemma}{0}

If $X$ is a projective manifold containing a hypersurface $Y$ with ample normal bundle, then - as already mentioned - 
the line bundle $\sO_X(Y)$ is big and therefore
the class $[Y]$ is in the interior of the effective cone of $X$. Dually we expect
 
\begin{conjecture} 
Let $X$ be a projective manifold, $C \subset X$ an irreducible curve. If the normal sheaf $N_{C/X}$ is ample,
then $[C]$ is in the interior of the Mori cone $\overline{NE}(X).$ 
\end{conjecture}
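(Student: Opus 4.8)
The plan is to establish the dual reformulation recorded after Theorem~\ref{thm 0}: with $n = \dim X$, the class $[C]$ lies in the interior of $\overline{NE}(X)$ precisely when every nonzero nef line bundle $L$ satisfies $L \cdot C > 0$, equivalently when every nef $L$ with $L \cdot C = 0$ is numerically trivial. I would split the argument according to whether $L$ is effective, i.e. whether $H^0(X, mL) \neq 0$ for some $m > 0$, since the effective case is governed by the tools already in hand (Schneider's concavity theorem and the non-existence of compact hypersurfaces with nef normal bundle in an $(n-1)$-convex manifold), while the general case is the genuine difficulty.

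In the effective case, which for smooth $C$ is exactly Theorem~\ref{thm 0}, the mechanism I would reproduce is as follows. Choose $D \in |mL|$. The ampleness of $N_{C/X}$ first forces $C$ to lie in no component of $D$: if $C$ were contained in a reduced component $D_0$, then $\sO_X(D_0)|_C = N_{D_0/X}|_C$ appears as a quotient of $N_{C/X}$ through the sequence $0 \to N_{C/D_0} \to N_{C/X} \to N_{D_0/X}|_C \to 0$, so it is ample of positive degree, giving $D_0 \cdot C > 0$; since the remaining components meet $C$ non-negatively this would yield $D \cdot C > 0$, contrary to hypothesis. Hence $D$ and $C$ share no component, and then $D \cdot C = 0$ forces $D \cap C = \emptyset$. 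Thus, if $L \not\equiv 0$, the divisor $D$ is a nonempty compact hypersurface inside $X \setminus C$, which is $(n-1)$-convex by Schneider's theorem, and its normal bundle $\sO_X(mL)|_D$ is nef. This contradicts the theorem that an $(n-1)$-convex manifold carries no compact hypersurface with nef normal bundle, so $D$ is empty, $mL \cong \sO_X$, and $L \equiv 0$.

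The hard part, and the reason the statement is only a conjecture, is the passage from nef to effective: a nef $L$ with $L \cdot C = 0$ need not have any effective multiple, so there is no divisor to feed into the $(n-1)$-convex machinery. I would attempt two routes. One is to prove that ampleness of $N_{C/X}$ together with $L \cdot C = 0$ already produces sections of $mL$ (an effectivity or semi-ampleness statement), reducing to the case above; this fails for nef bundles in general and would have to exploit the geometry of $C$ decisively. The more promising route is to run Andreotti-Grauert theory directly on $X \setminus C$: the finite-dimensionality of $H^i(X \setminus C, \sF)$ for $i \geq n-1$ should be combined with twists by $mL$ to extract an asymptotic lower bound on $H^0(X, mL)$, forcing effectivity whenever $L \not\equiv 0$ and $L \cdot C = 0$. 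The main obstacle is precisely this cohomological estimate --- controlling $H^0(X, mL)$ from the concavity of $X \setminus C$ alone, with no a priori positivity of $L$ transverse to $C$ --- and, secondarily, extending the component argument above to a singular, merely irreducible curve $C$ via its normal sheaf. Closing the cohomological gap would settle the conjecture.
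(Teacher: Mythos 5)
You have correctly recognized that this statement is posed as a conjecture and is not proved in the paper; your proposal does not supply a complete proof either, and it is honest about that. What you do argue --- the effective case --- is exactly the paper's Theorem \ref{thm 0} and the Corollary following Theorem \ref{thmg2}, and your route coincides with the paper's: reduce to a divisor $D \in |mL|$ disjoint from $C$, invoke Schneider's $(n-1)$-convexity of $X \setminus C$, and contradict the non-existence of compact hypersurfaces with nef normal bundle in an $(n-1)$-convex manifold (Theorems \ref{griffiths} and \ref{thmg2}, proved via Andreotti--Grauert finiteness, the boundedness of $\dim H^{n-1}(Y_k,\sF)$ on infinitesimal neighborhoods, and Riemann--Roch). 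The residual gap you identify --- a nef $L$ with $L \cdot C = 0$ admitting no effective multiple --- is precisely where the paper stops as well; neither of your two suggested routes for closing it is carried out in the paper, so your assessment of the state of the problem is accurate.

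One step of your effective-case argument is weaker than the paper's and would fail as written. To rule out $C \subset \mathrm{supp}(D)$ you use the sequence $0 \to N_{C/D_0} \to N_{C/X} \to N_{D_0/X}|_C \to 0$, which presupposes that $C$ does not lie in the singular locus of the component $D_0$; and your assertion that ``the remaining components meet $C$ non-negatively'' is false when $C$ lies on several components, since a second component containing $C$ may meet it negatively. The Lemma in Section 4 of the paper repairs both points: it selects a component $Y_1 \supset C$ with $Y_1 \cdot C \leq 0$, observes that the conormal map $N^*_{Y_1/X}|_C \to N^*_{C/X}$ must vanish because a nef line bundle admits no nonzero map into the dual of an ample bundle, deduces $C \subset \mathrm{Sing}(Y_1)$, and then obtains a contradiction from the induced nonzero map $N^*_{Y_1/X}|_C \to S^k N^*_{C/X}$, $k \geq 2$, coming from the power series expansion of the local equation of $Y_1$. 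With that lemma substituted for your component argument, your effective case reproduces the paper's partial result; the conjecture itself remains open.
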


This conjecture can be restated as follows. 

\begin{conjecture} \label{conjA} Let $X$ be a projective manifold, $L$ a nef line bundle and $C \subset X$ an irreducible curve with
ample normal bundle. If $L \cdot C = 0, $ then $L \equiv 0.$
\end{conjecture}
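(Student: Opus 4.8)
The plan is to reduce the statement to the hypersurface theorem stated above, using Schneider's observation that $Z := X \setminus C$ is $(n-1)$-convex. This will settle the conjecture in the case where $L$ is \emph{effective}, i.e. $H^0(X,mL)\neq 0$ for some $m>0$, which is precisely the content of Theorem \ref{thm 0}; removing the effectivity hypothesis is where I expect the real difficulty to lie.

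So suppose first that $L$ is effective and, arguing by contradiction, that $L\not\equiv 0$. Choosing $m>0$ with $H^0(X,mL)\neq 0$, the bundle $mL$ is nontrivial, so a nonzero section cuts out a nonzero effective divisor $D\in|mL|$ with $D\cdot C=m(L\cdot C)=0$. The first point is that $C$ cannot lie in ${\rm Supp}(D)$: if $C$ were contained in a prime component $D_1$, the normal bundle sequence
$$ 0 \to \sN_{C/D_1} \to \sN_{C/X} \to \sO_X(D_1)\vert C \to 0 $$
would realize $\sO_X(D_1)\vert C$ as a quotient of the ample bundle $\sN_{C/X}$, hence as a bundle of positive degree, giving $D_1\cdot C>0$; since the other components contribute non-negatively this forces $D\cdot C>0$, a contradiction. (In the singular or non-reduced situation one needs the analogue for the appropriate normal sheaf, which is exactly why the hypersurface theorem is phrased for possibly reducible and non-reduced $Y$.) Hence $C\not\subset{\rm Supp}(D)$, and then $D\cdot C=0$ together with the non-negativity of local intersection multiplicities yields $C\cap D=\emptyset$. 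Thus $D$ is a nonzero compact hypersurface inside the $(n-1)$-convex manifold $Z=X\setminus C$, and its normal bundle $\sN_{D/Z}=\sO_X(D)\vert D=mL\vert D$ is nef because $L$ is. This contradicts the hypersurface theorem, so $L\equiv 0$.

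The main obstacle is to dispense with effectivity. A nef line bundle with $L\cdot C=0$ and $L\not\equiv 0$ may admit no sections of any multiple — the classical ruled surfaces over an elliptic curve already carry nef bundles with $\nu(L)>0$ and $\kappa(L)=-\infty$ — so no divisor $D$ is available and the reduction above has nothing to act on. To attack the general case I would try to use the $(n-1)$-convexity of $Z$ more directly: Andreotti-Grauert finiteness makes $H^q(Z,\sF)$ finite-dimensional for $q\geq n-1$, and one would hope to play this against the (a priori polynomial) growth of $H^0(C,\mathrm{Sym}^k \sN_{C/X})$ forced by ampleness, working with $\sO_X(kL)$ along the infinitesimal neighbourhoods of $C$. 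An alternative is a positivity-of-currents approach, showing that $L\cdot C=0$ places $C$ in the null locus of the nef class $L$ and that ampleness of $\sN_{C/X}$ is incompatible with this unless $L\equiv 0$. Making either mechanism produce a numerical contradiction in the absence of any global section is, I expect, the crux.
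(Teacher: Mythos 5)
The statement you were given is a \emph{conjecture} in the paper, and the paper itself only establishes the effective case (this is Theorem \ref{thm 0} of the introduction, proved in Section 4 as a corollary of the singular hypersurface theorem together with a lemma on components of an effective nef divisor containing $C$). That is exactly the case you treat, by essentially the same route: take $D \in |mL|$, show $C \cap \mathrm{supp}(D) = \emptyset$, and then contradict the fact that a compact hypersurface in the $(n-1)$-convex manifold $X \setminus C$ cannot have nef normal bundle. Your assessment that removing effectivity is the real open difficulty is also accurate.

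There is, however, one genuine gap in your treatment of the effective case, namely the step ruling out $C \subset D_1$ for a prime component $D_1$ of $D$. The normal bundle sequence you invoke does not in general present $\sO_X(D_1)|_C$ as a quotient of $\sN_{C/X}$: the map $\sN_{C/X} \to \sN_{D_1/X}|_C$ is dual to the map $\kappa\colon \sN^*_{D_1/X}|_C \to \sN^*_{C/X}$ induced by a local equation of $D_1$, and $\kappa$ vanishes identically exactly when that equation lies in $\sI_C^2$, i.e.\ when $C \subset \mathrm{Sing}(D_1)$. In that case your sequence carries no information and $D_1 \cdot C > 0$ does not follow. The paper's lemma closes this as follows: if $D_1 \cdot C \leq 0$ then $\sN^*_{D_1/X}|_C$ is nef, so $\kappa = 0$ (a nonzero map from a nef line bundle into the anti-ample $\sN^*_{C/X}$ on a curve is impossible), hence $C \subset \mathrm{Sing}(D_1)$; expanding the local equation of $D_1$ to its first nonvanishing order $k \geq 2$ along $C$ then yields a nonzero map $\sN^*_{D_1/X}|_C \to S^k\sN^*_{C/X}$, impossible for the same reason. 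This is a separate issue from the non-reducedness of $Y$ in the hypersurface theorem, which is what your parenthetical remark points to. Finally, note that Schneider's $(n-1)$-convexity of $X \setminus C$ requires $C$ smooth (or a local complete intersection), so, as in the paper, the argument only covers smooth $C$ even though the conjecture is stated for irreducible curves.
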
 

It is interesting to note that in codimension different from $1$ and $n-1$, the corresponding statement is false, as demonstrated by an 
example of Voisin [Vo08]. \\
We consider first the case when $L$ has a section with smooth zero locus and when $C$ is smooth:

\begin{theorem} \label{griffiths} 
Let $X$ be a projective manifold, $Y \subset X$ a smooth hypersurface with nef normal bundle. Let $C \subset X$ 
be a smooth curve with ample normal bundle. Then $Y \cap C \ne \emptyset.$ 
\end{theorem}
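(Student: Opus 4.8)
The plan is to argue by contradiction, so suppose $Y\cap C=\emptyset$. Write $L=\sO_X(Y)$ and let $s$ be the section cutting out $Y$. Since $Y\cap C=\emptyset$, the section $s$ is nowhere zero on $C$, so $L|_C\cong\sO_C$, while $L|_Y=N_{Y/X}$ is nef by hypothesis. The key input, exactly as announced in the introduction, is M.~Schneider's theorem: as $N_{C/X}$ is ample, the complement $Z:=X\setminus C$ is $(n-1)$-convex in the sense of Andreotti--Grauert, where $n=\dim X$. Because $Y$ is disjoint from $C$, it sits inside $Z$ as a \emph{compact smooth hypersurface} whose normal bundle $N:=N_{Y/Z}=N_{Y/X}$ is nef. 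Thus the whole problem reduces to showing that this configuration is impossible; this is the content of the general theorem on $(n-1)$-convex manifolds stated in the introduction, and for smooth $Y$ I would establish it directly by the cohomological mechanism below.

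The engine is the interplay between Andreotti--Grauert finiteness and Serre duality on $Y$. By Andreotti--Grauert, $\dim H^i(Z,\sF)<\infty$ for every coherent sheaf $\sF$ and all $i\geq n-1$. Since $Z$ is a connected \emph{non-compact} complex manifold of dimension $n$, the top cohomology vanishes, $H^n(Z,\sF)=0$ (Siu). Now I feed the smooth divisor into the standard sequences
\[ 0\to\sO_Z(-(m+1)Y)\to\sO_Z(-mY)\to (N^*)^{\otimes m}\to 0,\qquad m\geq 0, \]
using that, $Y$ being smooth, $\sO_Z(-mY)$ is invertible and restricts to $(N^*)^{\otimes m}$ on $Y$. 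The long exact sequence together with $H^n(Z,-)=0$ yields a surjection
\[ H^{n-1}(Z,\sO_Z(-mY))\;\twoheadrightarrow\;H^{n-1}\!\big(Y,(N^*)^{\otimes m}\big)\;\cong\;H^0\!\big(Y,K_Y\otimes N^{\otimes m}\big)^{*}, \]
the last isomorphism being Serre duality on the compact $(n-1)$-dimensional $Y$.

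From here the strategy is to play the finiteness of the left side against the growth of the right side. When $N$ is nef and big, asymptotic Riemann--Roch gives $\dim H^0(Y,K_Y\otimes N^{\otimes m})\sim \frac{(N^{\,n-1})}{(n-1)!}\,m^{n-1}\to\infty$, so the groups on the right are unbounded. To convert this into a contradiction I would use that the Andreotti--Grauert isomorphism $H^{n-1}(Z,\sF)\cong H^{n-1}(Z_c,\sF)$ onto a fixed sublevel set $Z_c=\{\varphi<c\}$ holds with $c$ depending only on the exhaustion $\varphi$, not on $\sF$; on this fixed $(n-1)$-convex domain one expects a uniform, sub-$m^{n-1}$, bound for $\dim H^{n-1}(Z_c,\sO_Z(-mY))$ from the Hörmander-type estimates underlying Andreotti--Grauert, contradicting the polynomial growth just found.

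The hard part is precisely this uniform control, and more seriously the case in which $N$ is nef but \emph{not} big, i.e.\ $(N^{\,n-1})=0$: there the Riemann--Roch growth collapses and the surjection above carries no information, so one must exploit the positivity that $N$ does have. I would pass to the numerical dimension $\nu=\nu(N)$ and localise the argument on a $\nu$-dimensional subvariety of $Y$ along which $N$ is big, or else extract the contradiction from the semipositivity of $N$ directly, by showing that a tubular neighbourhood of $Y$ is pseudoconcave in a manner incompatible with the $(n-1)$-convex exhaustion of $Z$ near that neighbourhood. Handling this degenerate, non-big case cleanly — rather than the big case — is where I expect the real difficulty to lie.
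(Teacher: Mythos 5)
Your reduction is the same as the paper's: assume $Y\cap C=\emptyset$, invoke Umemura and Schneider to make $Z=X\setminus C$ an $(n-1)$-convex manifold containing $Y$ as a compact hypersurface with nef normal bundle, and then try to rule this out via Andreotti--Grauert finiteness, Siu's vanishing of $H^n$, and Serre duality. But the way you extract the contradiction has two gaps, both of which you flag yourself and neither of which is repairable along the lines you sketch. First, Andreotti--Grauert gives $\dim H^{n-1}(Z,\sF)<\infty$ for each fixed coherent $\sF$; it gives no bound uniform in $\sF$, and your left-hand groups $H^{n-1}(Z,\sO_Z(-mY))$ change with $m$. There is no reason for these dimensions to grow more slowly than $m^{n-1}$ --- on a relatively compact sublevel set the natural Morse-inequality-type bound for $h^{n-1}$ of the $m$-th power of a line bundle is $O(m^n)$ --- so the comparison with $h^0(Y,K_Y\otimes N^{\otimes m})$ yields nothing. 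Second, when $N$ is nef but not big the right-hand side does not grow either, and your proposed remedies (numerical dimension, pseudoconcavity of a tube) are hopes rather than arguments.

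The paper closes both gaps with two specific devices. (i) Instead of twisting the sheaf on $Z$ by $-mY$, it fixes one line bundle $\sF$ and varies the \emph{infinitesimal neighborhoods} $Y_k$ (defined by $\sI_Y^k$): the exact sequence $H^{n-1}(Z,\sF)\to H^{n-1}(Y_k,\sF)\to H^n(Z,\sI_Y^k\otimes\sF)=0$ exhibits every $H^{n-1}(Y_k,\sF)$ as a quotient of the \emph{same} finite-dimensional space, so $\dim H^{n-1}(Y_k,\sF)\le M$ uniformly in $k$ for free; comparing $Y_k$ with $Y_{k-1}$ (with Kodaira vanishing killing the relevant $H^{n-2}$ terms) then forces $H^{n-1}(Y,(N^*)^k\otimes\sF)=0$ for $k\gg0$. (ii) The sheaf $\sF$ is chosen \emph{negative} on $Y$, so that dualizing gives $H^0(Y,K_Y\otimes N^k\otimes A)=0$ for $k\ge k_0(A)$ with $A=\sF^*$ ample; by Kodaira vanishing this says $\chi(Y,K_Y\otimes N^k\otimes A)=0$, and since this holds for all powers of a fixed ample $A$ it contradicts Riemann--Roch with no bigness assumption on $N$ whatsoever. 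The auxiliary ample twist is exactly what makes the nef-but-not-big case a non-issue, which is the difficulty you correctly identify but do not resolve.
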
 

\begin{proof} Let $n = \dim X.$ Suppose to the contrary that $Y \cap C = \emptyset$ and set $Z = X \setminus C. $ 
By [Um73], the normal bundle $N_{C/X} $ is Griffiths-positive, hence by Schneider [Sch73], $Z$ is $(n-1)-$convex in the sense of Andreotti-Grauert [AG62]. By the finiteness theorem of Andreotti-Grauert 
$$ \dim H^{n-1}(Z,\sF)$$
for any coherent sheaf $\sF$ on $Z.$ 
We will use in the following only line bundles $\sF.$ 
Let $Y_k$ be the $k-$th infinitesimal neighborhood $Y,$ i.e., $Y$ is defined by the ideal $\sI_Y^k.$ 
Consider the exact sequence 
$$ H^{n-1}(Z,\sF) \to H^{n-1}(Y_k,\sF) \to H^n(Z,\sI_Y^k \otimes \sF). $$
The last group vanishing due to the non-compactness of $Z$ (Siu [Si69]), we conclude that 
$\dim H^{n-1}(Y_k,\sF)$ is bounded from above: there is a constant $M > 0$ such that
$$ \dim H^{n-1}(Y_k,\sF) \leq M  \leqno (1) $$
for all positive $k.$ 
\vskip ,2cm \noindent 
Now choose $\sF$ to be a negative line bundle on $Y.$ Then by Kodaira vanishing
$$ H^1(Y,K_Y \otimes N_Y^{\mu} \otimes \sF^*) = 0\leqno (2) $$
for all $\mu \geq 0.$ 
Dually $$H^{n-2}(Y,N^{*\mu}_Y \otimes \sF) = 0. $$
Therefore we obtain an exact sequence
$$ 0 \to H^{n-2}(Y_k,\sF) \to H^{n-2}(Y_{k-1},\sF) \to H^{n-1}(Y,(N_Y^*)^k \otimes \sF) \to $$
$$\to H^{n-1}(Y_k,\sF) 
{\buildrel {a_k} \over {\to}} H^{n-1}(Y_{k-1},\sF) \to 0.$$
By the boundedness statement (1), $a_k$ is an isomorphism for $k \gg 0.$ 
Thus 
$$  H^{n-1}(Y,(N_Y^*)^k \otimes \sF) = 0 $$
for $k \gg 0.$
Dualizing
$$ H^0(Y,K_Y \otimes N_Y^k \otimes \sF^*) = 0$$
for all $k \geq k_0(\sF)).$ 
Setting for simplicity 
$B = N_Y $ and $A = \sF^*$, we are in the following situation: \\
{\it $B$ is a nef line bundle on $Y$ such that for all ample line bundles $A$ there is a number $k_0$ such that 
$$ H^0(Y,K_Y \otimes kB \otimes A) = 0$$
for $k \geq k_0(A).$ } \\
Equivalently by Kodaira vanishing
$$ \chi(Y,K_Y \otimes kB \otimes A) = 0. $$
This is clearly impossible by Riemann-Roch, reaching a contradiction. 
Actually we do need to consider all ample $A$ here; it suffices to take for $A$ the powers of a
fixed ample line bundle.

\end{proof}

\begin{corollary} \label{cor1} Let $L = \sO_X(Y)$ be a nef line bundle with $Y$ smooth. Let $C \subset X$ be a smooth curve with
ample normal bundle. Then $L \cdot C > 0. $
\end{corollary}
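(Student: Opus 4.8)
The plan is to reduce everything to Theorem~\ref{griffiths}, which already carries out the analytic heavy lifting; the only genuinely new content is a case distinction according to whether $C$ lies in $Y$. Since $L$ is nef we automatically have $L \cdot C \geq 0$, so the task is to rule out $L \cdot C = 0$, i.e.\ to establish strict positivity. The first thing I would check is that the hypotheses of Theorem~\ref{griffiths} are met: because $N_{Y/X} = \sO_X(Y)|_Y = L|_Y$ and the restriction of a nef line bundle is nef, the smooth hypersurface $Y$ has nef normal bundle, while $C$ is by assumption a smooth curve with ample normal bundle.

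Suppose first that $C \not\subset Y$. Then Theorem~\ref{griffiths} applies verbatim and yields $Y \cap C \ne \emptyset$. Since $C$ is not contained in $Y$, the restriction of $Y$ to $C$ is a well-defined effective divisor which is nonzero because the intersection is nonempty; hence
$$ L \cdot C = \deg\big(\sO_X(Y)|_C\big) > 0, $$
as desired.

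It remains to treat the case $C \subset Y$, where Theorem~\ref{griffiths} says nothing. Here I would instead invoke the normal bundle sequence
$$ 0 \to N_{C/Y} \to N_{C/X} \to N_{Y/X}|_C \to 0. $$
A quotient of an ample sheaf is ample, so $N_{Y/X}|_C$ is an ample line bundle on the curve $C$, and an ample line bundle on a curve has positive degree. Since $N_{Y/X}|_C = \sO_X(Y)|_C$, this gives once more
$$ L \cdot C = \deg\big(N_{Y/X}|_C\big) > 0. $$

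In both cases $L \cdot C > 0$, which is the assertion. I expect no serious obstacle beyond Theorem~\ref{griffiths} itself: the argument is formal once one observes that the only configuration in which $Y$ could fail to meet $C$ in a positive-degree divisor -- namely $C \subset Y$ -- is precisely the one where the ampleness of $N_{C/X}$ forces the restricted normal bundle $N_{Y/X}|_C$ to be ample, and hence of positive degree.
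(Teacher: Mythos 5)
Your proposal is correct and uses exactly the same two ingredients as the paper: Theorem~\ref{griffiths} to handle the case $C \not\subset Y$, and the normal bundle sequence $0 \to N_{C/Y} \to N_{C/X} \to N_{Y/X}\vert C \to 0$ together with the fact that quotients of ample bundles are ample to handle $C \subset Y$. The only difference is presentational — the paper argues by contradiction from $L \cdot C = 0$ while you give a direct case split — so this is essentially the paper's proof.
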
 

\begin{proof} Assume $L \cdot C = 0.$ Then by Theorem \ref{griffiths}, $C \subset Y.$ 
Now the normal bundle sequence
$$ 0 \to N_{C/Y} \to N_{C/X} \to N_{Y/X} \vert C = \sO_C \to 0 $$
contradicts the ampleness of $N_{C/X}. $ 
\end{proof}

\begin{theorem} \label{thmg2}  Theorem \ref{griffiths} remains true for singular, possibly non-reduced, reducible hypersurfaces $Y.$
\end{theorem}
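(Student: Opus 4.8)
The plan is to run the proof of Theorem~\ref{griffiths} essentially verbatim, replacing only the two places where smoothness of $Y$ entered: the use of Kodaira vanishing on $Y$, and the appeal to Serre duality and Riemann--Roch with $K_Y$. The entire ``convexity half'' is untouched, since it concerns only $C$ and $Z=X\setminus C$: assuming $Y\cap C=\emptyset$ we still have $Y\subset Z$, and by Umemura and Schneider $Z$ is $(n-1)$-convex, so the Andreotti--Grauert finiteness theorem together with Siu's vanishing $H^n(Z,-)=0$ gives, for any line bundle $\sF$ near $Y$, the uniform bound $\dim H^{n-1}(Y_k,\sF)\le M$ of $(1)$. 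Here $Y_k$ is the subscheme defined by $\sI_Y^k$; since $Y$ is an effective Cartier divisor, $\sI_Y=\sO_X(-Y)$ is invertible, so $\sI_Y^k/\sI_Y^{k+1}\cong (N^{*})^{\otimes k}$ is a line bundle on $Y$ and the short exact sequences relating consecutive $Y_k$ are exactly as before.

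The two structural facts that carry the rest are: first, $Y$ is Gorenstein (an effective Cartier divisor in the smooth $X$), so its dualizing sheaf $\omega_Y=(K_X\otimes\sO_X(Y))|_Y$ is invertible and Serre duality $H^i(Y,\sG)^{\vee}\cong H^{\,n-1-i}(Y,\omega_Y\otimes\sG^{-1})$ holds for line bundles $\sG$; second --- and this is the real replacement for Kodaira vanishing --- I would invoke \emph{Fujita's vanishing theorem}. Applied to the projective scheme $Y$, the ample line bundle $A|_Y$ (for $A$ ample on $X$), and the coherent sheaf $\omega_Y$, it furnishes an $m_0$ with
\[
H^i\bigl(Y,\ \omega_Y\otimes (A|_Y)^{\otimes m}\otimes P\bigr)=0
\]
for all $i>0$, all $m\ge m_0$, and all nef line bundles $P$ on $Y$. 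Taking $P=N^{\otimes k}$ (nef for every $k\ge0$, as $N=N_{Y/X}$ is nef) and fixing $\sF:=(A|_Y)^{-m}$ with $m\ge m_0$ kills all higher cohomology of $\omega_Y\otimes N^{k}\otimes\sF^{*}$ for every $k\ge0$. In particular $H^1(Y,\omega_Y\otimes N^k\otimes\sF^{*})=0$, which by Serre duality is precisely statement $(2)$, now valid for all $k$ with no smoothness hypothesis.

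With $(1)$ and $(2)$ in hand the five-term sequence and stabilization are unchanged: $(2)$ makes the maps $H^{n-2}(Y_k,\sF)\hookrightarrow H^{n-2}(Y_{k-1},\sF)$ injective, so their dimensions stabilize, while the surjections $a_k$ together with the bound $M$ force $a_k$ to be an isomorphism for $k\gg0$, and exactness then yields $H^{n-1}(Y,(N^{*})^k\otimes\sF)=0$ for $k\gg0$. Dualizing gives $H^0(Y,\omega_Y\otimes N^k\otimes\sF^{*})=0$ for $k\gg0$, and since the higher cohomology also vanishes we get $\chi(Y,\omega_Y\otimes N^k\otimes(A|_Y)^{m})=0$ for $k\gg0$. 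As a function of $k$ this Euler characteristic is a polynomial, so it vanishes identically; evaluating at $k=0$ gives $\chi(Y,\omega_Y\otimes(A|_Y)^{m})=0$. Running this for every $m\ge m_0$ and reading the outcome as a Hilbert polynomial in $m$ produces the contradiction: $\chi(Y,\omega_Y\otimes(A|_Y)^m)$ has degree $\dim Y=n-1$ with leading coefficient $(A^{\,n-1}\cdot[Y])/(n-1)!>0$, hence cannot vanish for all large $m$.

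I expect the vanishing step to be the only genuine obstacle, and the point is to select the right theorem: ordinary Kodaira or Kawamata--Viehweg vanishing is unavailable on a singular, non-reduced or reducible $Y$, whereas Fujita's theorem holds on an arbitrary projective scheme and, crucially, is uniform over the growing nef twists $N^{\otimes k}$. A second subtle point is that one must not extract the contradiction from the top-degree term in $k$ --- the nef line bundle $N$ may satisfy $N^{\,n-1}=0$ on $Y$, so that polynomial can degenerate --- but rather from the ample parameter $m$, where positivity is guaranteed; this is exactly why the conclusion is drawn by letting $A$ run through the powers of a fixed ample bundle.
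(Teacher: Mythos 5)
Your proof is correct, and it diverges from the paper's at exactly the two points that matter. The paper keeps everything on the ambient manifold: the vanishing $(2)$ is obtained by applying Kodaira vanishing on $X$ to $K_X\otimes\tilde\sF^*\otimes\sO_X(\mu Y)$ and restricting via the sequence $0\to\sO_X(\mu Y)\to\sO_X((\mu+1)Y)\to N_Y^{\mu}\to 0$ together with adjunction, and the final contradiction is extracted from the constancy in $k$ of $\chi(X,K_X\otimes\sO_X(kY)\otimes\tilde A)$, which by Riemann--Roch on $X$ forces $Y\equiv 0$. You instead stay on $Y$: Fujita's vanishing theorem for the Gorenstein dualizing sheaf $\omega_Y$, uniform over the nef twists $N^{\otimes k}$, replaces Kodaira vanishing, and the contradiction comes from the Snapper polynomial $\chi(Y,\omega_Y\otimes(A\vert Y)^{m})$ in the ample parameter $m$. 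Each choice buys something. The paper's route avoids all duality and vanishing theory on the singular scheme $Y$ (beyond the Serre duality steps it inherits from the smooth case), but it needs $\sO_X(\mu Y)\otimes\tilde\sF^*$ to be ample on $X$ for every $\mu\geq 0$ --- i.e., it implicitly uses that $\sO_X(Y)$ is nef on all of $X$, which holds in every application in the paper but is formally stronger than the stated hypothesis that only the normal bundle $N_{Y/X}$ is nef. Your route uses exactly the stated hypothesis, at the cost of invoking Serre duality and Fujita vanishing on the possibly non-reduced, reducible $Y$; both are legitimate since $Y$ is an effective Cartier divisor in a smooth projective variety, hence projective, equidimensional and Gorenstein, and the leading coefficient $(A^{n-1}\cdot[Y])/(n-1)!$ of your Hilbert polynomial correctly accounts for the multiplicities $m_i$ of the components. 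Your closing remark that one must not extract the contradiction from the top-degree term in $k$ --- since $N^{n-1}$ may well vanish on $Y$ --- is exactly right, and is the same reason both proofs ultimately let the ample twist vary.
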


\begin{proof} The proof in the smooth case basically goes over, with the following modifications. 
Of course, the use of Kodaira vanishing (2) is critical. 
We fix a negative line bundle $\sF$ which we may choose as restriction of a negative line bundle $\tilde \sF$ on $X.$ 
Then we can apply Kodaira vanishing on $X$ to obtain the vanishing (2), also for higher $H^q$'s. 
Namely 
$$ H^q(X,K_X \otimes \tilde \sF^* \otimes \sO_X((\mu + 1)Y)) = H^{q+1}(X,K_X \otimes \tilde \sF^* \otimes \sO_X(\mu Y)) = $$
implies (using the adjunction formula)
$$ H^q(Y,K_Y \otimes \sF^* \otimes N_Y^{\mu}) = 0. $$
At the end we compute $\chi(K_Y \otimes kB \otimes A) $ via Riemann-Roch on $X.$ 
In fact, we obtain as before that
$$ \chi(Y,K_Y \otimes N_Y^k \otimes A) = 0 \eqno (*) $$
for all extendible ample line bundle $A$ on $Y$ and $k \geq k_0(A).$ Here extendibility means that there is an ample
line bundle $\tilde A$ on $X$ such that $\tilde A \vert Y =  A.$ 
Then by (*) 
$ \chi(X, K_X \otimes \sO(kY) \otimes \tilde A)  $
is constant
for large (hence all) $k.$ By Riemann-Roch this immediately implies $Y \equiv 0,$ which is absurd. 
\end{proof} 

\begin{corollary} Conjecture \ref{conjA} holds, if $L$ is effective and $C$ smooth (with ample normal bundle). 
\end{corollary}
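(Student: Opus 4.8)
The plan is to follow the proof of Corollary~\ref{cor1} almost verbatim, making two changes: the smooth hypersurface input Theorem~\ref{griffiths} is replaced by its singular analogue Theorem~\ref{thmg2}, and the normal bundle exact sequence is replaced by a leading-term map that accounts for $C$ possibly sitting inside the hypersurface to arbitrary order.

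First I would reduce to a convenient representative. Since $L$ is effective and replacing $L$ by a positive multiple affects neither its nefness, nor the hypothesis $L\cdot C=0$, nor the conclusion $L\equiv 0$, I may assume $H^0(X,L)\neq 0$ and choose an effective divisor $Y$ with $L=\sO_X(Y)$. Assuming $L\cdot C=0$, the goal is $Y=0$; so suppose $Y\neq 0$ and aim for a contradiction. The normal bundle $N_{Y/X}=L\vert Y$ is nef, being the restriction of the nef line bundle $L$, so Theorem~\ref{thmg2} applies and gives $Y\cap C\neq\emptyset$. Now $Y\cdot C=L\cdot C=0$; since an effective divisor meeting the curve $C$ in a nonempty proper intersection has strictly positive intersection number, the only possibility is $C\subset\mathrm{Supp}(Y)$. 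This is exactly where Theorem~\ref{thmg2} is essential: without it one could not exclude an effective $Y$ disjoint from $C$, and disjointness is precisely what is compatible with $L\cdot C=0$ while $L\not\equiv 0$.

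It remains to contradict the ampleness of $N_{C/X}$ in the case $C\subset\mathrm{Supp}(Y)$, which is the one genuinely new point compared with Corollary~\ref{cor1}. Let $m=\mathrm{mult}_C(Y)\geq 1$ be the order of vanishing of $Y$ along the generic point of $C$. Because $C$ is smooth it is regularly embedded, so $\sI_C^m/\sI_C^{m+1}\cong S^m(N^*_{C/X})$, and, as the order of vanishing can only increase at special points, $\sO_X(-Y)=\sI_Y\subseteq\sI_C^m$ as subsheaves of $\sO_X$. Passing to the leading term then defines a sheaf homomorphism $L^*\vert C=\sO_X(-Y)\vert C\to\sI_C^m/\sI_C^{m+1}=S^m(N^*_{C/X})$, which is nonzero at the generic point of $C$ by the choice of $m$. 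Dualizing yields a nonzero map $\phi\colon S^m(N_{C/X})\to L\vert C$. Since $N_{C/X}$ is ample, so is $S^m(N_{C/X})$, and the image $\phi\bigl(S^m(N_{C/X})\bigr)=(L\vert C)(-D)$ with $D\geq 0$ is an invertible quotient of an ample bundle on the curve $C$, hence itself ample of strictly positive degree. This forces $L\cdot C=\deg(L\vert C)\geq\deg\bigl((L\vert C)(-D)\bigr)>0$, contradicting $L\cdot C=0$. Hence $Y=0$ and $L\equiv 0$.

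The main obstacle is the last paragraph: one must make the leading-term construction globally well-defined and globally nonzero, that is, verify for the smooth curve $C$ that the sheaf inclusion $\sI_Y\subseteq\sI_C^m$ holds everywhere (not just at the generic point) and that the ordinary and symbolic powers of $\sI_C$ coincide, so that $\sI_C^m/\sI_C^{m+1}$ is genuinely $S^m(N^*_{C/X})$. Once this functorial description of the normal cone of the smooth curve is in hand, the positivity bookkeeping with ample quotients is routine.
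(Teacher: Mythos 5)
Your proof is correct and follows essentially the same route as the paper: Theorem \ref{thmg2} forces ${\rm supp}(Y)\cap C\neq\emptyset$, and the remaining case $C\subset{\rm supp}(Y)$ is excluded by a leading-term map into symmetric powers of the conormal bundle of $C$, which is exactly the mechanism of the Lemma the paper invokes. The only (cosmetic) difference is that the paper argues component by component, isolating a component $Y_1$ containing $C$ with $Y_1\cdot C\leq 0$ and contradicting the nefness of $N^*_{Y_1/X}\vert C$ via a nonzero map to $S^kN^*_{C/X}$, whereas you work with the whole divisor and its multiplicity $m$ along $C$ and read off $\deg(L\vert C)>0$ from an ample quotient of $S^mN_{C/X}$ --- the same idea, resting on the same technical point (coincidence of ordinary and symbolic powers of $\sI_C$ for the smooth curve $C$) that you correctly identify and justify.
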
 

This is a consequence of Theorem \ref{thmg2} and the following 

\begin{lemma} Let  $Y = \sum_{i=1}^N m_iY_i$ be an effective nef divisor on the projective manifold $X.$ Let $C \subset X$ 
be an irreducible curve with ample normal sheaf. Suppose that ${\rm supp} Y \cap C \ne \emptyset.$ Then $Y \cdot C \ne 0.$
\end{lemma}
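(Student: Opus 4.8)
The plan is to prove the stronger statement that $Y \cdot C > 0$, by showing that every prime component of $Y$ meeting $C$ contributes a strictly positive intersection number. Write $Y = \sum_{i=1}^N m_i Y_i$ with $m_i > 0$ and $Y_i$ irreducible reduced. Since $Y$ is nef we have $Y \cdot C \geq 0$, and $Y \cdot C = \sum_i m_i (Y_i \cdot C)$. I would split the components into three types according to their position relative to the irreducible curve $C$: those disjoint from $C$ (contributing $Y_i \cdot C = 0$), those meeting $C$ in a nonempty finite set (i.e. $C \not\subseteq Y_i$), and those containing $C$ (i.e. $C \subseteq Y_i$). Because ${\rm supp}\,Y \cap C \neq \emptyset$, at least one component of the second or third type occurs, so it suffices to prove $Y_i \cdot C > 0$ for every component of those two types; then $Y \cdot C$ is a sum of nonnegative terms with at least one strictly positive, hence $Y\cdot C > 0$, contradicting the assumption $Y \cdot C = 0$.

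For a component $Y_i$ with $C \not\subseteq Y_i$ this is immediate and uses no hypothesis on the normal sheaf: the canonical section of $\sO_X(Y_i)$ restricts to a section of $\sO_X(Y_i)|_C$ which is not identically zero (as $C \not\subseteq Y_i$) yet vanishes on the nonempty finite set $C \cap Y_i$. Pulling back to the normalization of $C$, a nonzero section of a line bundle with a zero forces $Y_i \cdot C = \deg\big(\sO_X(Y_i)|_C\big) > 0$.

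The essential case, where the ampleness of $N_{C/X}$ enters, is $C \subseteq Y_i$. Let $m = {\rm mult}_C Y_i \geq 1$ be the multiplicity of $Y_i$ along $C$, i.e. the vanishing order at the generic point of $C$ of a local equation $f$ of $Y_i$. Taking the leading term of $f$ along $C$ produces a nonzero $\sO_C$-linear symbol map $\sigma\colon \sO_X(-Y_i)|_C \to {\rm Sym}^m N^*_{C/X}$, which over the smooth locus of $C$ identifies $\sO_X(-Y_i)|_C$ with a line subsheaf. As $\sigma$ is a nonzero map out of a line bundle on the reduced irreducible curve $C$ it is injective, so it exhibits $\sO_X(-Y_i)|_C$, of degree $-(Y_i \cdot C)$, as a subsheaf of ${\rm Sym}^m N^*_{C/X}$. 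Dualizing (in characteristic $0$, $({\rm Sym}^m N^*_{C/X})^{*} = {\rm Sym}^m N_{C/X}$) yields a generically surjective morphism ${\rm Sym}^m N_{C/X} \to \sO_X(Y_i)|_C$. Since $N_{C/X}$ is ample, so is ${\rm Sym}^m N_{C/X}$; the image of this morphism is a quotient of an ample sheaf, hence ample, and an ample rank-one sheaf on an irreducible projective curve has positive degree. As this image is a subsheaf of the line bundle $\sO_X(Y_i)|_C$, we conclude $Y_i \cdot C = \deg\big(\sO_X(Y_i)|_C\big) > 0$, which finishes all cases.

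The main obstacle is precisely this last case when $Y_i$ is singular along $C$, so that the naive first-order conormal map $\sO_X(-Y_i)|_C \to N^*_{C/X}$ vanishes; one must instead work with the $m$-th order symbol into ${\rm Sym}^m N^*_{C/X}$. The remaining care is bookkeeping: one should check that the symbol map is genuinely globally defined on $C$ and that the ampleness/degree argument is carried out correctly on the possibly singular curve $C$, most safely by passing to its normalization and using that ampleness, together with its consequences for symmetric powers and for quotients, is preserved under finite pullback and passage to quotients.
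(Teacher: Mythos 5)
Your proof is correct and follows essentially the same route as the paper: for a component $Y_i$ containing $C$, both arguments produce a nonzero map from $\sO_X(-Y_i)\vert_C$ into a symmetric power $S^m N^*_{C/X}$ of the conormal sheaf and play its degree off against the ampleness of $N_{C/X}$. The only difference is organizational --- the paper argues by contradiction, first showing the first-order map vanishes (forcing $C \subset \mathrm{Sing}(Y_1)$) before passing to the higher-order symbol, whereas you go directly to the multiplicity-$m$ symbol and prove $Y_i \cdot C > 0$ for every such component.
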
 

\begin{proof} We may assume that $C \subset {\rm supp}(Y).$ 
Let $Y_1, \ldots, Y_s$ be the components $Y_j$ such that $C \subset Y_j.$ 
We may furthermore assume that $Y_j \cdot C \leq 0$ for some $j$, 
otherwise we are already done. After renumbering, we have $j = 1.$  
Consider the canonical map
$$ \kappa: N^*_{Y_1/X} \vert C \to N^*_{C/X}. $$
Since $N^*_{Y_1/X} \vert C$ is nef, $\kappa \vert C = 0.$ This is to say that $C \subset {\rm Sing}(Y_1). $ 
By taking power series expansions of the local equation of $Y_1$, we obtain a number $k \geq 2$ and a non-zero map
$$ N^*_{Y_1/X} \vert C \to S^kN^*_{C/X}, $$
contradicting again the nefness of  $N^*_{Y_1/X} \vert C$.
\end{proof}

The proof of Theorem \ref{thmg2} actually shows

\begin{corollary} Let $Z$ be an $(n-1)-$convex complex manifold of dimension $n$ and $Y \subset X$ a compact hypersurface. Then 
the normal bundle $N_{Y/Z}$ cannot be nef. 
\end{corollary}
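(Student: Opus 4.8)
The plan is to argue by contradiction: assume $B := N_{Y/Z}$ is nef and reproduce, in the abstract $(n-1)$-convex setting, the cohomological dévissage behind Theorem~\ref{thmg2}. Write $n = \dim Z$ and let $Y_k \subset Z$ be the $k$-th infinitesimal neighbourhood, defined by $\sI_Y^k$; since $Y$ is a Cartier divisor in the manifold $Z$, the graded pieces $\sI_Y^k/\sI_Y^{k+1} \cong (B^*)^{\otimes k}$ are line bundles on $Y$, even if $Y$ is singular or non-reduced.

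The two facts that replace the compact ambient $X$ of Theorem~\ref{thmg2} are the Andreotti--Grauert finiteness $\dim H^{n-1}(Z,\sF) < \infty$ for every coherent $\sF$ (valid since $Z$ is $(n-1)$-convex) and the vanishing $H^n(Z, \sI_Y^k \otimes \sF) = 0$ coming from the non-compactness of the $n$-dimensional $Z$. First I would fix a line bundle $\sF$ on $Z$ and feed it into $H^{n-1}(Z,\sF) \to H^{n-1}(Y_k,\sF) \to H^n(Z,\sI_Y^k\otimes\sF)$; the last term vanishes, so a fixed finite-dimensional space surjects onto $H^{n-1}(Y_k,\sF)$, giving a bound $\dim H^{n-1}(Y_k,\sF) \le M$ uniform in $k$, i.e. estimate (1) of the earlier proof.

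Next I would run the infinitesimal dévissage. Suppose $\sF$ is chosen with $\sF|_Y$ negative, so that $A := \sF^*|_Y$ is ample. Kodaira vanishing gives $H^{n-2}(Y, (B^*)^j \otimes \sF) = 0$ for all $j \ge 0$ (Serre-dual to $H^1(Y, K_Y \otimes B^j \otimes A) = 0$, valid because $B^j\otimes A$ is ample), and dévissage along $0 \to (B^*)^j\otimes\sF \to \sF|_{Y_{j+1}} \to \sF|_{Y_j} \to 0$ then forces $H^{n-2}(Y_k,\sF) = 0$ for all $k$. The same sequences show that the restrictions $H^{n-1}(Y_{k+1},\sF) \to H^{n-1}(Y_k,\sF)$ are surjective, so by the uniform bound $M$ they are isomorphisms for $k \gg 0$; combined with $H^{n-2}(Y_k,\sF)=0$ this yields $H^{n-1}(Y,(B^*)^k\otimes\sF) = 0$, i.e. $H^0(Y, K_Y \otimes kB \otimes A) = 0$ for $k \gg 0$. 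Thus for the nef $B$ one has, for every sufficiently ample $A$, that $H^0(Y, K_Y \otimes kB \otimes A)$ vanishes eventually; by Kodaira vanishing the higher cohomology also vanishes, so $\chi(Y, K_Y \otimes kB \otimes A) = 0$ for $k \gg 0$. As a polynomial in $k$ this forces $\chi(Y, K_Y \otimes A) = 0$; but for $A$ sufficiently ample $\chi(Y, K_Y\otimes A) = h^0(Y, K_Y \otimes A) > 0$, a contradiction.

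The step I expect to be the main obstacle is the existence of the auxiliary bundle $\sF$ together with the applicability of Kodaira vanishing, Serre duality and Riemann--Roch on $Y$. In Theorems~\ref{griffiths}--\ref{thmg2} both come for free: $Y$ sits in a projective $X$, one takes $\sF = \tilde\sF|_Z$ for $\tilde\sF$ the inverse of an ample bundle on $X$, and the Euler characteristic is computed upstairs on $X$ via adjunction (which is exactly what covers the singular and non-reduced cases). Abstractly there is no such $X$, so one must ensure that $Y$ is projective (or at least Moishezon/K\"ahler, so that ample bundles, Kodaira vanishing and Riemann--Roch are available) and that a line bundle on $Z$ restricting to a negative bundle on $Y$ exists -- for instance by shrinking $Z$ to a $q$-convex neighbourhood of $Y$ carrying such a bundle, or by passing to a resolution of $Y$ and using the Hirzebruch--Riemann--Roch and Kodaira--Akizuki--Nakano theorems there. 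Making this auxiliary positivity available in the stated generality, and checking it interacts correctly with the single $Z$-level finiteness sequence, is where I would spend the most effort.
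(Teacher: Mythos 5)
Your proposal follows essentially the same route as the paper: the paper's entire proof of this corollary is the remark that the proof of Theorem~\ref{thmg2} only uses the $(n-1)$-convexity of $Z$ (for the Andreotti--Grauert finiteness of $H^{n-1}(Z,\sF)$) and its non-compactness (for Siu's vanishing of $H^{n}$), which is exactly the d\'evissage you reproduce. The obstacle you flag at the end --- that without an ambient projective $X$ one must separately secure the negative auxiliary bundle $\sF$ on $Y$ and the validity of Kodaira vanishing, duality and Riemann--Roch on a possibly singular, non-reduced $Y$ --- is genuine, but the paper does not address it either; its one-line proof silently inherits these ingredients from the projective setting of Theorems~\ref{griffiths} and~\ref{thmg2}, so your honest identification of where the extra work lies is, if anything, more careful than the source.
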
 

This leads to the following

\begin{question} \label{qu} {\rm Let $X$ be a $q-$convex manifold and $Y \subset X$ a compact subvariety with nef normal sheaf. 
Is then $\dim Y \leq q-1?$ } 
\end{question} 

Besides the case $q = n-1,$ this question has a positive answer also for $q = 1,$ because then 
there exists a proper modification $\phi: X \to Y$ to a Stein space $Y$. Thus $\dim \phi(C) = 0$ which 
easily contradicts the nefness of the normal sheaf of the curve $Y$. \\
Even if $Y$ has ample normal sheaf, Question~\ref{qu} is wide open. In fact a positive answer to Question~\ref{qu} would imply a solution 
to the following conjecture of Hartshorne 
\vskip .2cm 
{\it Let $Z$ be a projective manifold contaning submanifolds $X$ and $Y$ with ample normal bundles. 
If $\dim X + \dim Y \geq  \dim Z,$ then $X \cap Y \ne \emptyset.$ } 

\vskip .2cm \noindent
For further informations on this conjecture, we refer to \cite{Pe09}. 
\vskip .2cm
The connection to Question~\ref{qu} is provided by the convexity of $Z \setminus X$ resp. of $Z \setminus Y$. 

\begin{remark} {\rm  If $C$ is a {\it singular} curve, Theorem \ref{thmg2} should essentially remain valid.   
The only point which needs to be shown is the $(n-1)-$convexity of the complement $X \setminus C,$ which is somehow subtle. 
Here is what is known. If $C$ is locally a complete intersection, there is no problem, [Sch73], things basically work as in the smooth case.
In general, it follows from the results of Fritzsche [Fr76],[Fr77], that $X \setminus C$ is $(n-1)-$convex, provided
that the rank of the conormal sheaf $\sI/\sI^2$ at every point is at most $n-1$, which means of course that $\sI/\sI^2$ is  locally free of rank $n-1.$} 
\end{remark} 

Instead making assumptions on the line bundle $L$, one might instead impose conditions on $C$.

\begin{theorem} Let $X$ be a projective manifold, $C \subset X$ be an irreducible curve with ample
normal bundle. Assume that $C$ moves in a family $(C_s)$ covering $X$. Let $L$ be a nef line bundle with $L \cdot C = 0.$
Then $L \equiv 0.$ 
\end{theorem}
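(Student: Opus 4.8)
The plan is to study $L$ through its nef reduction map and to use the covering family to place a member $C_s$ with ample normal bundle inside a general fibre. First I would record two preliminary reductions. Since all members of an irreducible family are algebraically, hence numerically, equivalent, we have $L \cdot C_s = 0$ for every $s$; and since ampleness of the normal bundle is an open condition in the family, the general member $C_s$ again has ample normal bundle. Thus we may freely work with a general, and in particular very general, member.

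Next I would invoke the nef reduction of $L$ on the projective manifold $X$: an almost holomorphic dominant map $f: X \dasharrow W$ with connected fibres such that $L$ is numerically trivial on the general fibre, and such that for very general $x \in X$ every irreducible curve $C$ through $x$ with $f(C)$ positive-dimensional satisfies $L \cdot C > 0$; here $\dim W$ is the nef dimension $n(L)$, and $n(L)=0$ holds precisely when $L \equiv 0$. I would fix a very general point $x$ and a member $C_s \ni x$ of the covering family, arranged so that $C_s$ has ample normal bundle, avoids the indeterminacy locus of $f$, and is contained in the (compact, and for general data smooth) fibre $F = F_x$. Because $L \cdot C_s = 0$, the defining property of the nef reduction forces $\dim f(C_s) = 0$, so $C_s$ is contracted and indeed lies in $F$.

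The contradiction will then come from the normal bundle. As $F$ is a general fibre of an almost holomorphic map, after passing to a resolution on which $f$ becomes a morphism one sees that $F$ is smooth with trivial normal bundle $N_{F/X} \simeq \sO_F^{\oplus \dim W}$. The inclusions $C_s \subset F \subset X$ supply the right-exact normal bundle sequence $N_{C_s/F} \to N_{C_s/X} \to N_{F/X}|_{C_s} \to 0$, hence a surjection $N_{C_s/X} \to \sO_{C_s}^{\oplus \dim W}$. Since a quotient of an ample sheaf is ample while $\sO_{C_s}^{\oplus \dim W}$ is not ample once $\dim W \geq 1$, this contradicts the ampleness of $N_{C_s/X}$. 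Therefore $\dim W = n(L) = 0$, and consequently $L \equiv 0$.

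The step demanding the most care is the interface between the almost holomorphic nef reduction and the normal bundle computation: one must choose the very general point $x$ so that simultaneously the nef-reduction property holds at $x$, the member $C_s$ through $x$ has ample normal bundle, and the fibre $F_x$ is smooth, compact and disjoint from the indeterminacy locus, so that the triviality of $N_{F/X}$ and the surjectivity in the normal bundle sequence are genuinely available. Since \emph{very general} removes only a countable union of proper subvarieties whereas each required condition holds on a dense open set, these can be arranged, and the covering hypothesis is exactly what guarantees a member through the chosen point. The remaining ingredient, the equivalence $n(L)=0 \Leftrightarrow L \equiv 0$, is standard.
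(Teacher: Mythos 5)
Your proof is correct and follows essentially the same route as the paper: pass to the nef reduction $f: X \dasharrow W$ of $L$, use the covering family to place a general member $C_s$ with ample normal bundle inside a compact fibre, and derive a contradiction with ampleness via the (generically) surjective map $N_{C_s/X} \to N_{F/X}\vert_{C_s} \simeq \sO_{C_s}^{\oplus \dim W}$ unless $\dim W = 0$. You merely spell out the normal-bundle contradiction that the paper leaves implicit (and uses explicitly in its earlier covering-family theorem).
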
 

\begin{proof} Let $f: X \dasharrow W$ be the nef reduction of $L$, cite{workshop}  The map $f$ is almost holomorphic and, 
due to the existence
of the family $(C_s),$ the map $f$ is not trivial: $\dim W < \dim X.$ 
A general member $C_s$ has still ample normal bundle; on the other hand $C_s$ is contained in a (compact) fiber of $f.$ 
This is only possible when $\dim W = 0.$ Hence $L \equiv 0 $ by [workshop]. 
\end{proof} 

It is of course not true that a curve with ample normal bundle is in the interior of the movable cone $\overline{ME}(X). $
Simply start with a projective manifold $Y$ containing a curve $C$ with ample normal bundle and let $\pi: X \to Y$
be the blow-up of $Y$ at a point $Y \not \in C.$ If $E \subset  X$ is the exceptional divisor, then $E \cdot C = 0$,
hence $[C]$ is in the boundary of $\overline{ME}(X),$ since $\overline{ME}(X)$ is the dual cone of the pseudo-effective
cone by [BDPP04]. 
In some vague sense this however should be the only obstruction:

\begin{conjecture} \label{conjB} Let $X$ be a projective manifold and $C \subset X$ a curve with ample normal sheaf. 
Let $L$ be a pseudo-effective line bundle with $L \cdot C = 0.$ Then the numerical dimension $\nu(L) = 0.$
\end{conjecture}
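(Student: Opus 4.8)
The plan is to reduce Conjecture \ref{conjB} to the (still open) nef Conjecture \ref{conjA} by means of the divisorial Zariski decomposition, after first extracting from the ampleness of $N_{C/X}$ the purely positional fact that the class $[C]$ is \emph{movable}.

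\emph{Movability of $C$.} I would first refine the argument of the Lemma above, which used nefness only to guarantee a non-positive intersection number, in order to show: for every effective divisor $D$ one has $D \cdot C \ge 0$, with strict inequality whenever some component of $D$ contains $C$. Indeed, if $D$ is prime with $C \subset D$ and $D \cdot C \le 0$, then the conormal map $\sO_C(-D) \to N^*_{C/X}$ must vanish, since a nonzero map would exhibit (dually) an ample quotient of $N_{C/X}$ of non-positive degree; hence $C \subset {\rm Sing}(D)$, and taking the leading term of a local equation of $D$ along $C$ produces, for some $k \ge 2$, a nonzero map $\sO_C(-D) \to S^k N^*_{C/X}$ into the anti-ample bundle $S^k N^*_{C/X}$, which is absurd. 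Since $D \cdot C \ge 0$ for all effective $D$, by continuity $\alpha \cdot C \ge 0$ for every pseudo-effective class $\alpha$; by the duality of \cite{BDPP04} between the pseudo-effective and movable cones this says precisely that $[C] \in \overline{ME}(X)$.

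\emph{Splitting off the negative part.} Write the divisorial Zariski decomposition $L = P + N$, where $P = Z(L)$ is modified nef and $N = \sum_i a_i D_i \ge 0$ is the negative part; recall that $\nu(L) = \nu(P)$ and that $\nu(L) = 0$ is equivalent to $P \equiv 0$. Because $P$ is pseudo-effective and $[C]$ is movable, $P \cdot C \ge 0$; likewise $N \cdot C \ge 0$. Since $L \cdot C = P \cdot C + N \cdot C = 0$, both terms vanish. From $N \cdot C = 0$ together with the strict positivity above, no component $D_i$ can contain $C$, so $C \not\subset {\rm Supp}(N)$ and the negative part is harmless: the whole problem is reduced to showing $P \equiv 0$.

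\emph{The core statement and the obstacle.} It remains to prove that a \emph{nonzero} modified nef class $P$ satisfies $P \cdot C > 0$; this is exactly the modified-nef analogue of Conjecture \ref{conjA} (the nef case being recovered since a nef class with $\nu = 0$ is numerically trivial). My plan is to resolve the non-nef locus: choose modifications $\pi_j : X_j \to X$ with approximate Zariski decompositions $\pi_j^* P = A_j + E_j$, $A_j$ nef and $E_j \ge 0$ exceptional, $\pi_{j*}A_j \to P$; writing $C_j$ for the strict transform of $C$ one has $P \cdot C = A_j \cdot C_j + E_j \cdot C_j$, and one would like to force $A_j \cdot C_j = 0$ and then apply Conjecture \ref{conjA} on $X_j$ to the nef class $A_j$ and the curve $C_j$ to obtain $A_j \equiv 0$, hence $P \equiv 0$ in the limit. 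The main obstacle is twofold, and is precisely what keeps \ref{conjB} conjectural. First, Conjecture \ref{conjA} is itself open in general, so this scheme yields \ref{conjB} unconditionally only where the nef case is available --- for instance when $P$ is effective, via Theorem \ref{thmg2} and its corollary, or when $C$ moves in a covering family. Second, even granting \ref{conjA} one must control $C$ under the modification: $C$ may well meet the center of $\pi_j$ (it is exactly along the non-nef locus of $P$ that $P\cdot C$ is in danger), so one must guarantee that $C_j$ still carries an ample normal bundle and that the exceptional term $E_j \cdot C_j$ does not absorb the positivity, uniformly in $j$. Making this normal-bundle bookkeeping work --- or, alternatively, proving the core statement directly by an Andreotti--Grauert convexity argument in the spirit of Theorem \ref{thmg2}, which unfortunately does not transpose verbatim from hypersurfaces to curves --- is where the real difficulty lies.
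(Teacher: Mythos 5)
Your proposal is essentially the paper's own treatment of this statement: the paper does not prove Conjecture \ref{conjB} either, but in the Remark following the Proposition on $\kappa(L)\leq 0$ it performs exactly your reduction --- divisorial Zariski decomposition $L\equiv M+E$, the observation that $[C]\in\overline{ME}(X)$ forces $M\cdot C=0$, and the conclusion that \ref{conjB} is equivalent to the assertion that a class nef in codimension one and trivial on $C$ is numerically trivial, which remains open. Your write-up adds a genuine justification of the movability of $[C]$ (adapting the conormal/singular-locus argument of the Lemma to arbitrary effective divisors and invoking \cite{BDPP04} duality), which the paper merely asserts, and you correctly identify where the real difficulty sits.
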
 

For the notion of the numerical dimension of a pseudo-effective line bundle we refer to [Bou04] and [BDPP04]. 
In case $L$ is nef, $\nu(L) = 0$ just says that $L \equiv 0 $, so that Conjecture \ref{conjB} implies \ref{conjA}
(which is of course clear from the point of view of cones: $\overline{ME}(X) \subset \overline{NE}(X)). $ 

Here is some evidence for Conjecture \ref{conjB}.

\begin{proposition} Let $L$ be a line bundle and $C \subset X$ an irreducible curve with ample
normal sheaf. Assume that $L \cdot C = 0.$ Then $\kappa (L) \leq 0.$ 
\end{proposition}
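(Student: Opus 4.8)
The plan is to argue by contradiction: assuming $\kappa(L)\ge 1$, I will extract from $|mL|$ a pencil of effective divisors that are numerically trivial on $C$, and then show that $C$ can neither be contained in, nor be disjoint from all of, the members of this pencil. To set it up, recall that $\kappa(L)\ge 1$ means that for some $m>0$ the rational map $\phi_{|mL|}$ has positive–dimensional image, which is equivalent to the existence of two sections $\sigma_0,\sigma_1\in H^0(X,mL)$ whose ratio $\sigma_0/\sigma_1$ is a non–constant meromorphic function. For $t=[t_0:t_1]\in\bP^1$ I would set $D_t=\{t_1\sigma_0-t_0\sigma_1=0\}$; since $\sigma_0,\sigma_1$ are linearly independent, each $D_t$ is an effective divisor with $D_t\sim mL$, hence $D_t\cdot C=m\,(L\cdot C)=0$. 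Write $B=\{\sigma_0=\sigma_1=0\}$ for the base locus and $f=[\sigma_0:\sigma_1]\colon X\setminus B\to\bP^1$ for the associated morphism.

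The heart of the argument is a dichotomy for each fixed $t$: either $C\cap{\rm supp}(D_t)=\emptyset$, or $C$ is contained in some irreducible component of $D_t$. The second alternative is what I would rule out. Writing $D_t=\sum a_iZ_i$ with $C\subset Z_1$, I claim $Z_1\cdot C>0$, and this is exactly the conormal computation already carried out in the Lemma on effective nef divisors above. Indeed, if $Z_1\cdot C\le 0$ then $\sO_X(-Z_1)|_C=\sN^*_{Z_1/X}|_C$ has nonnegative degree, so a nonzero restriction map into $\sN^*_{C/X}$ would saturate to a sub–line–bundle of nonnegative degree; since $\sN_{C/X}$ is ample every sub–bundle of $\sN^*_{C/X}$ has negative degree, a contradiction, whence the map vanishes, i.e. $C\subset{\rm Sing}(Z_1)$, and passing to order $k\ge 2$ in the defining equation gives a nonzero map $\sO_X(-Z_1)|_C\to S^k\sN^*_{C/X}$, excluded the same way by ampleness of $S^k\sN_{C/X}$. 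Granting $Z_1\cdot C>0$, and noting that every other component has $Z_i\cdot C\ge 0$, I would get $D_t\cdot C\ge a_1(Z_1\cdot C)>0$, contradicting $D_t\cdot C=0$. Hence $C$ lies on no member, and the effectivity of $D_t$ together with $D_t\cdot C=0$ then forces $C\cap{\rm supp}(D_t)=\emptyset$ for every $t$ (a degree–zero effective restriction to $C$ vanishing nowhere).

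To conclude, taking two distinct members shows $C\cap B=\emptyset$, so $f$ restricts to an honest morphism $f|_C\colon C\to\bP^1$; choosing any $x\in C$ and $t^{\ast}=f(x)$ gives $x\in C\cap D_{t^{\ast}}$, contradicting the disjointness just established. This yields $\kappa(L)\le 0$. The only genuinely delicate step is the claim $Z_1\cdot C>0$ in the case where $C$ is singular, where $\sN^*_{C/X}$ need not be locally free and the degree bookkeeping must be done on the normalization of $C$ using the sheaf–theoretic notion of ampleness; but this is precisely the situation already treated in the Lemma above, so I expect no new obstacle there, and the remainder of the argument is formal.
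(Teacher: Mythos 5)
Your proof is correct, but it is genuinely different from the paper's: the paper disposes of this Proposition in one line by citing Theorem~2.1 of [PSS99] (a general bound on $\kappa(L)$ coming from formal functions along a subvariety with ample normal sheaf), whereas you give a self-contained pencil argument. Your route is the classical one: if $\kappa(L)\ge 1$, a pencil $\{D_t\}\subset |mL|$ with $D_t\cdot C=0$ forces, for each $t$, either $C\cap\mathrm{supp}(D_t)=\emptyset$ or $C$ inside a component of $D_t$; the second case is killed by exactly the conormal-map computation in the paper's Lemma on effective nef divisors, and the first case is killed for all $t$ simultaneously because the pencil defines a morphism $C\to\bP^1$ off the base locus, so $C$ must meet some member. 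Two remarks on the fit with the paper. First, you cannot quote that Lemma verbatim, since its hypothesis asks for $D_t$ nef and here $L$ is an arbitrary line bundle; but you correctly observe that the only input the Lemma's proof uses is $Z_1\cdot C\le 0$ (equivalently, nefness of $\sO_X(-Z_1)|_C$ on $C$ itself), which is available for free in your contradiction hypothesis, so your rederivation is legitimate and in fact shows the Lemma's global nefness assumption is not needed. Second, the delicate point you flag --- the degree bookkeeping for a nonzero map of a nonnegative-degree line bundle into $\sN^*_{C/X}$ when $C$ is singular and $\sI/\sI^2$ is not locally free --- is treated at exactly the same level of detail as in the paper's own Lemma, so no new gap is introduced. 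What each approach buys: the citation yields the statement as a corollary of a stronger general theorem, while your argument is elementary, transparent, and keeps the paper self-contained by reusing machinery already present in this section.
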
 

\begin{proof}
This is a direct consequence of [PSS98], Theorem 2.1. 
\end{proof} 

\begin{remark} {\rm A pseudo-effective line bundle $L$ admits by Boucksom \cite{Bou04} a so-called divisorial Zariski decomposition:
$$ L \equiv M + E, $$
where $M$ is an $\bR-$divisor which is nef in codimension 1 and $E$ is an effective $\bR-$divisor. 
Suppose that $L \cdot C = $ for a curve $C$ with ample normal sheaf. Since $[C] \in \overline{ME}(X),$ we have
$E \cdot C \geq 0$ and $M \cdot C \geq 0,$
hence   $$ M \cdot C  = 0.$$ 
Notice that $M \equiv 0$ is equivalent to $\nu(L) = 0.$ 
Therefore Conjecture~\ref{conjB} is equivalent to the following conjecture}
\end{remark} 

\begin{conjecture} Let $X$ be a projective manifold, $C \subset  X$ an irreducible curve with ample normal sheaf and 
$L$ an $\bR-$divisor which is nef in codimension 1. If $L \cdot C = 0,$ then $L \equiv 0.$ 
\end{conjecture}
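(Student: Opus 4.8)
The plan is to reduce the statement, via Boucksom's structure theory of classes nef in codimension $1$, to the hypersurface situation already settled in Theorem \ref{thmg2}, and then to upgrade the cohomological argument there from honest effective divisors to $\bR$-classes. Recall first that a class nef in codimension $1$ is in particular pseudo-effective, so the preceding Proposition gives $\kappa(L) \le 0$; thus $L$ is never big, and there are only two regimes to handle: $\kappa(L) = 0$, where some multiple of $L$ is numerically an effective divisor, and $\kappa(L) = -\infty$. In the first regime I would represent $L \equiv D$ by an effective $\bR$-divisor and apply an adaptation of the Lemma and Corollary on effective nef divisors: the point is that $D$ nef in codimension $1$ has trivial divisorial negative part, so the conormal argument $N^*_{D_1/X}\vert C \to S^k N^*_{C/X}$ of that Lemma should still control the components of $D$ meeting $C$. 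The genuinely new content lies in the second regime.

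Here I would use that, by [Bou04], a class nef in codimension $1$ is a limit of pushforwards $(\mu_j)_*\alpha_j$ of nef (indeed Kähler) classes $\alpha_j$ under modifications $\mu_j\colon X_j \to X$. Choosing the centers of $\mu_j$ to avoid $C$ — which one may hope to arrange, since $L \cdot C = 0$ ought to localize the obstruction away from a neighbourhood of $C$ — the strict transform $\tilde C_j$ is isomorphic to $C$ with the same ample normal sheaf, so $X_j \setminus \tilde C_j$ is again $(n-1)$-convex by Schneider [Sch73]. On $X_j$ one then runs the infinitesimal-neighbourhood and Riemann-Roch argument of Theorem \ref{thmg2}, now with the nef class $\alpha_j$ playing the role of the nef normal bundle $B = N_Y$: Andreotti-Grauert finiteness on $X_j \setminus \tilde C_j$, combined with Kodaira vanishing, should force $\alpha_j \cdot \tilde C_j > 0$ unless $\alpha_j \equiv 0$. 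By the projection formula $(\mu_j)_*\alpha_j \cdot C = \alpha_j \cdot \tilde C_j$, letting $j \to \infty$ would then yield $L \cdot C \ge 0$ with equality only if the approximants degenerate, that is $L \equiv 0$. Equivalently, in the language of the final Remark, one shows that $\nu(L) \ge 1$ produces a nonzero movable self-intersection class $\langle L^{\nu(L)} \rangle$ pairing strictly positively against $[C]$, contradicting $L \cdot C = 0$.

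The main obstacle is precisely the interface between the numerical and the cohomological arguments. Theorem \ref{thmg2} exploits an honest effective hypersurface $Y$ and its infinitesimal thickenings $Y_k$ (defined by $\sI_Y^k$) to bound $\dim H^{n-1}(Y_k,\sF)$ and then contradict Riemann-Roch; an $\bR$-class that is only nef in codimension $1$, and which a priori carries no sections at all when $\kappa(L) = -\infty$, supplies no such divisor, so the sheaf-theoretic machinery cannot be applied verbatim and must be replaced by a current-theoretic computation of $\nu(L)$. Compounding this, Boucksom's modifications need not be isomorphisms near $C$; if a center meets $C$ the strict transform may lose the ampleness of its normal sheaf and the convexity of the complement can break down, so controlling the geometry of $\tilde C_j$ uniformly in $j$ is delicate. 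Extending the convexity-plus-Riemann-Roch contradiction to classes nef in codimension $1$ while keeping the modifications away from $C$ is where I expect the real difficulty to lie, and is no doubt why the assertion is recorded here only as a conjecture.
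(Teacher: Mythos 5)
You should be aware at the outset that the paper does not prove this statement: it is recorded as a conjecture, and the only thing the paper establishes about it is the content of the preceding Remark, namely that via Boucksom's divisorial Zariski decomposition $L \equiv M + E$ it is \emph{equivalent} to Conjecture \ref{conjB} ($\nu(L)=0$ for pseudo-effective $L$ with $L\cdot C=0$). So there is no proof in the paper to compare yours against, and your sketch does not close the conjecture either; it is a research programme whose critical steps are exactly the open points.

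Concretely, three steps in your outline are gaps rather than reductions. First, in the regime $\kappa(L)=0$ an effective representative $D$ of a class that is merely nef in codimension $1$ need not be a nef divisor, and both Theorem \ref{thmg2} and the Lemma on effective nef divisors use nefness in an essential way (the Lemma needs $N^*_{Y_1/X}\vert C$ nef, i.e. $Y_1\cdot C\le 0$ is exploited against the negativity of $N^*_{C/X}$, and Theorem \ref{thmg2} needs $N_Y$ nef to run Kodaira vanishing and the Riemann--Roch count); ``trivial negative part in the Zariski decomposition'' does not give $\sO_Y(Y)\vert Y$ nef, so the hypersurface machinery does not apply verbatim. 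Second, and more seriously, in the regime $\kappa(L)=-\infty$ the modifications $\mu_j$ in Boucksom's approximation are dictated by the singularities of the positive currents in the class of $L$, not by the geometry of $C$, so there is no way to arrange their centers to avoid $C$; and even granting that, the asserted step ``Andreotti--Grauert plus Kodaira forces $\alpha_j\cdot\tilde C_j>0$ unless $\alpha_j\equiv 0$'' for a \emph{nef class} $\alpha_j$ is precisely Conjecture \ref{conjA} on $X_j$, which the paper proves only when the nef class is effective (Theorem \ref{thm 0} resp.\ Theorem \ref{thmg2}). Your argument is therefore circular: it assumes on each $X_j$ the very statement whose openness motivates the conjecture. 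Third, the limiting step fails as stated: having $\alpha_j\cdot\tilde C_j>0$ for every $j$ is perfectly compatible with the limit $L\cdot C=0$ and $L\not\equiv 0$ (positivity is not a closed condition), so one would need a uniform lower bound on $\alpha_j\cdot\tilde C_j$ in terms of a norm of $\alpha_j$, and nothing in the convexity argument produces such a bound.
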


In order to show that the class $[C]$ of an irreducible curve $C \subset X$ is in the interior of the movable cone
one needs a more global assumption than just the ampleness of the normal sheaf. We shall use the following 
notation introduced in \cite{PSS99}.

\begin{definition} \label{defflag}
Let $X$ be a projective manifold. A sequence 
$$ Y_q \subset Y_{q+1} \subset \ldots \subset  Y_n = X $$
of $k-$dimensional irreducible subvarieties $Y_k \subset X$ is an ample $q-$flag if for every $q  \leq k \leq n-1$ there is an
ample Cartier divisor $D_k$ on the normalization $\eta_{k+1}: \tilde Y_{k+1} \to Y_{k+1} $ such that $Y_k = \eta_{k+1}({\rm supp}(D_{k+1})). $
\end{definition}

\begin{remark} {\rm The main result in \cite{BDPP04} implies that the closed cone generated by the classes of curves appearing as the first 
member of an ample $(n-1)-$flag is the movable cone. }
\end{remark} 

Now we have easily

\begin{theorem}  Let $X$ be a projective manifold, $C \subset X$ an irreducible curve appearing in ample flag
$$ C = Y_1 \subset \ldots \subset  Y_q \ldots \subset  Y_n = X. $$
Then $[C]$ is in the interior of $\overline{ME}(X).$ 
\end{theorem}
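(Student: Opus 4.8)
The plan is to use the duality of \cite{BDPP04}: the movable cone $\overline{ME}(X)$ is the dual cone of the pseudo-effective cone $\overline{{\rm Eff}}(X)$ of divisor classes. Consequently $[C]$ lies in the \emph{interior} of $\overline{ME}(X)$ if and only if $D \cdot C > 0$ for every nonzero pseudo-effective class $D$ on $X$. By the Remark preceding the theorem, $[C]$ already lies in $\overline{ME}(X)$ (it is the first member of an ample flag), so $D \cdot C \geq 0$ holds automatically for every such $D$; the whole point is to upgrade this to strict positivity. So I fix a nonzero pseudo-effective class $D$ and aim to show $D \cdot C > 0$.

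The main step is a descent along the flag down to the bottom surface. Write $\eta_k : \tilde Y_k \to Y_k \subset X$ for the normalization composed with the inclusion, and let $A_{k+1}$ be the ample Cartier divisor on $\tilde Y_{k+1}$ with $\eta_{k+1}({\rm supp}\,A_{k+1}) = Y_k$, for $1 \leq k \leq n-1$. At the bottom, $\tilde Y_2$ is a normal projective surface and $A_2$ is an ample curve on it with $\eta_2({\rm supp}\,A_2) = Y_1 = C$. Since every component of $A_2$ either dominates the curve $C$ or maps to a point, we have $(\eta_2)_*[A_2] = e\,[C]$ for some integer $e \geq 1$, and the projection formula then gives $\eta_2^* D \cdot A_2 = e\,(D \cdot C)$, so it suffices to prove $\eta_2^* D \cdot A_2 > 0$. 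On a projective surface an ample class has strictly positive intersection with every nonzero pseudo-effective class (ampleness means lying in the interior of the nef cone, which on a surface is dual to $\overline{NE} = \overline{{\rm Eff}}$), so I would be done as soon as I know that $\eta_2^* D$ is a nonzero pseudo-effective class on $\tilde Y_2$.

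Two technical points then remain, and they are where the real work lies. The first is that $\eta_2^* D$ is pseudo-effective: restricting a pseudo-effective class to a fixed subvariety need not preserve pseudo-effectivity in general, but here I would invoke Boucksom's divisorial Zariski decomposition $D \equiv M + N$ of \cite{Bou04} and the fact that the flag members, being images of ample divisors, are not contained in the rigid negative-definite support of $N$; restricting the two parts separately and pulling back to $\tilde Y_2$ then keeps the class pseudo-effective. The second, and main, obstacle is the non-vanishing $\eta_2^* D \neq 0$: if $\eta_2^* D = 0$ then $D|_{Y_2} \equiv 0$, and I would climb the flag using the Lefschetz-type injectivity of the restriction map $N^1(\tilde Y_{k+1}) \to N^1(A_{k+1})$ for the ample divisor $A_{k+1}$, which is available precisely because $\dim \tilde Y_{k+1} = k+1 \geq 3$ at every step used (the climb starts at $Y_2 \subset Y_3$), concluding successively $D|_{Y_3} \equiv 0, \ldots, D|_{Y_n} = D \equiv 0$ and contradicting $D \neq 0$. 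The hard part is securing this Lefschetz injectivity on the normal, possibly singular, members $\tilde Y_{k+1}$ with their possibly singular and reducible ample divisors $A_{k+1}$; on smooth varieties it is the classical Lefschetz theorem for Néron–Severi groups, and the technical content is the normal singular version, which simultaneously underwrites the legitimacy of restricting numerically trivial and pseudo-effective classes to the ample members of the flag.
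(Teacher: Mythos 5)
Your architecture is the same as the paper's: by the BDPP duality reduce to showing that a pseudo-effective class $D$ with $D\cdot C=0$ is numerically trivial, and establish this by induction up the flag, passing from $D|_{Y_k}\equiv 0$ to $D|_{Y_{k+1}}\equiv 0$ via the ample divisor $A_{k+1}$ on $\tilde Y_{k+1}$ whose support maps onto $Y_k$. The difference lies in the engine of the inductive step, and there your proof has a genuine gap: you drive the climb with a Lefschetz-type injectivity of $N^1(\tilde Y_{k+1})\to N^1(A_{k+1})$ for an ample Cartier divisor that may be reducible and non-reduced, on a normal variety that may be singular. You yourself flag this as ``the hard part'' and do not supply it; no off-the-shelf Lefschetz theorem covers this generality, and in any case you would be proving injectivity for arbitrary classes when you only ever need it for the single pseudo-effective class $D$.

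The paper fills exactly this slot with an elementary lemma that makes Lefschetz unnecessary: on a normal projective variety $V$ of dimension $m$, if $L$ is pseudo-effective, $A=\sum m_iA_i$ is ample and $L|_{A_i}\equiv 0$ for every component $A_i$, then $L\equiv 0$. The proof is two lines: for very ample $H_1,\dots,H_{m-2}$ the $1$-cycle class $L\cdot H_1\cdots H_{m-2}$ lies in $\overline{NE}(V)$ because $L$ is pseudo-effective; it pairs to zero with the ample class $A$, hence vanishes; varying the $H_j$ gives $L\equiv 0$. This already works for $m=2$, so the separate treatment of the bottom surface $\tilde Y_2$ in your argument is also unnecessary -- the induction starts directly from $L\cdot C=0$. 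The one point you raise that the paper passes over in silence is whether the pullback of $D$ to the normalizations $\tilde Y_{k+1}$ remains pseudo-effective; your proposed fix via the divisorial Zariski decomposition is itself unjustified (nothing a priori prevents a flag member from lying in the negative part of $D$), so that issue is not resolved by either text, but it is orthogonal to the main missing idea, namely that pseudo-effectivity of $D$ is what substitutes for the Lefschetz theorem.
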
 

\begin{proof} We need to prove the following statement. 
\vskip .2cm 
{\it Let $L$ be a pseudo-effective line bundle with $L \cdot C = 0. $ Then $L \equiv 0.$ }
\vskip .2cm \noindent
We prove inductively that $L \vert Y_i \equiv 0$ for all $i.$ \\
The claim for $i = 1$ is our assumption $L \cdot C = 0.$ So suppose the statement for $i$, i.e. 
$L \vert Y_i \equiv 0.$ Using the notations of Definition~\ref{defflag} we find an ample divisor $D_{i+1} $ on
$\tilde Y_{i+1}$ such that 
$$ \eta_{i+1}^*(L)  \vert\  {\rm supp} D_{i+1} \equiv 0 $$
Now we conclude by the following proposition. 
\end{proof}

\begin{proposition} Let $X$ be a normal projective variety, $D = \sum m_i D_i$ an ample divisor and $L$ a pseudo-effective
line bundle. If $L \vert D_i \equiv 0$ for all $i,$ then $L \equiv 0.$ 
\end{proposition}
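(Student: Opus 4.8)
The plan is to reduce the assertion $L \equiv 0$ to the single numerical identity $L \cdot D^{n-1} = 0$, where $n = \dim X$, and then to upgrade this identity to numerical triviality by testing the pseudo-effective class $L$ against a complete-intersection curve class lying in the interior of the movable cone. Recall that $L \equiv 0$ means $L \cdot \gamma = 0$ for every irreducible curve $\gamma \subset X$, and that $D$ being ample makes the class $D^{n-1} \in N_1(X)$ a natural positive $1$-cycle against which to probe $L$.

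First I would compute $L \cdot D^{n-1}$ by expanding $D = \sum_i m_i D_i$. One has
$$ L \cdot D^{n-1} = \sum_i m_i \,\bigl(L \cdot D_i \cdot D^{n-2}\bigr), $$
so it suffices to see that every summand vanishes. For a fixed prime component $D_i$ the projection formula gives
$$ L \cdot D_i \cdot D^{n-2} = \bigl(L|_{D_i}\bigr) \cdot \bigl(D|_{D_i}\bigr)^{n-2}. $$
Here $D|_{D_i}$ is ample on the $(n-1)$-dimensional variety $D_i$, so $\bigl(D|_{D_i}\bigr)^{n-2}$ is an effective complete-intersection $1$-cycle class; since $L|_{D_i} \equiv 0$ by hypothesis, it pairs to zero with every $1$-cycle on $D_i$, and in particular this summand is $0$. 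Summing over $i$ yields $L \cdot D^{n-1} = 0$.

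It then remains to prove the key positivity statement: \emph{if $L$ is pseudo-effective and $L \cdot D^{n-1} = 0$ for an ample $D$, then $L \equiv 0$}. On a smooth $X$ this is immediate from [BDPP04]: since $D$ is ample, the complete-intersection class $D^{n-1}$ lies in the \emph{interior} of the movable cone $\overline{ME}(X)$, and as $\overline{ME}(X)$ is the dual of the pseudo-effective cone, an interior movable class pairs strictly positively with every nonzero pseudo-effective class. With $L \cdot D^{n-1} = 0$ this forces $L \equiv 0$.

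The main obstacle is establishing this last implication on a merely \emph{normal} $X$, where the cleanest form of the [BDPP04] duality (and the description of the interior of $\overline{ME}$) is not directly available. I would handle it by an induction on $n$ that cuts $X$ down by general members of $|mD|$: in characteristic zero a general such member $H$ is again normal (Bertini) and ample, the restriction $L|_H$ stays pseudo-effective, and
$$ \bigl(L|_H\bigr) \cdot \bigl(D|_H\bigr)^{n-2} = m\,\bigl(L \cdot D^{n-1}\bigr) = 0, $$
so the inductive hypothesis gives $L|_H \equiv 0$; a Lefschetz-type injectivity of the restriction map on numerical groups then lifts this to $L \equiv 0$ for $n \geq 3$. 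The base of the induction is the surface case, where (after passing to a resolution) the Hodge index theorem together with the Zariski decomposition of the pseudo-effective class $L$ yields $L \equiv 0$ from $L \cdot D = 0$. I expect the surface base case and the control of the restriction maps through the normal singularities to be the genuinely delicate points; the identity $L \cdot D^{n-1} = 0$ itself is formal.
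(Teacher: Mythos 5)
Your opening computation is fine and matches the paper's in spirit: restricting to the components $D_i$, where $L$ is numerically trivial, kills the relevant intersection numbers. But the second half of your argument, where you try to upgrade $L\cdot D^{n-1}=0$ to $L\equiv 0$ on a merely normal $X$, contains a genuine gap. The ``Lefschetz-type injectivity of the restriction map on numerical groups'' $N^1(X)\to N^1(H)$ for a general member $H\in |mD|$ is not a formal fact: injectivity here means that $L\cdot C=0$ for all curves $C\subset H$ forces $L\cdot C=0$ for all curves $C\subset X$, and a curve not contained in $H$ cannot simply be restricted. To get this one has to pass through homological equivalence (numerical and homological equivalence of divisors agree up to torsion) and a weak Lefschetz theorem for ample, possibly singular, divisors on a normal variety; none of this is supplied, and you flag it yourself as the delicate point, as you do the surface base case. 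Since the proposition is stated for normal $X$, the proof is incomplete as it stands.

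The missing idea is to dualize: work in $N_1(X)$ with the Mori cone rather than in $N^1(X)$ with the pseudo-effective/movable duality. This is what the paper does. For arbitrary very ample divisors $H_1,\dots,H_{n-2}$ the class $L\cdot H_1\cdots H_{n-2}$ lies in $\overline{NE}(X)$ (a pseudo-effective divisor is a limit of effective ones, and cutting an effective divisor by general very ample divisors gives an effective curve); by your computation it pairs to zero with the ample divisor $D$; Kleiman's criterion --- valid on any projective variety, normal or not --- then forces $L\cdot H_1\cdots H_{n-2}=0$ in $N_1(X)$, and letting the $H_i$ vary yields $L\equiv 0$. This three-line argument dissolves exactly the difficulty you spend most of your effort on: the ``merely normal'' obstruction only appears because you insist on the divisor-side statement (interior of the movable cone, BDPP), which is where smoothness, resolutions and Hard Lefschetz would have to enter.
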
 

\begin{proof} By assumption, $L \cdot D_i = 0$ for all $i,$ hence $L \cdot D = 0.$ 
Let $H_1, \ldots, H_{n-2}$ be arbitrary very ample divisors. Then 
$$L \cdot H_1 \cdot \ldots \cdot H_{n-2} \in \overline{NE}(X),$$
since $L$ is pseudo-effective. Hence the vanishing 
$$L \cdot H_1 \cdot \ldots \cdot H_{n-2} \cdot D = 0$$
together with the ampleness of $D$ implies
$$L \cdot H_1 \cdot \ldots \cdot H_{n-2} = 0.$$ 
This is only possible when $L \equiv 0 .$
\end{proof}

\vskip 2cm \noindent
Thomas Peternell \\
Mathematisches Institut, Universit\" at Bayreuth \\
D-95440 Bayreuth, Germany \\
thomas.peternell@uni-bayreuth.de

\end{document}